\documentclass{amsart}

\usepackage{amsmath,amssymb,amsthm,euro} 
\usepackage{bbm}
\usepackage{stmaryrd}

\newcommand\F{\mathcal F}
\newcommand\I{\mathbbm{1}}
\renewcommand{\P}{\mathbb{P}}
\newcommand{\M}{\mathcal M}
\renewcommand{\S}{\mathcal T}
\newcommand{\D}{\mathcal D}
\newcommand{\SI}{\mathcal {SI}}
\renewcommand{\rho}{\varrho}

\newcommand\ma{\mathcal{A}}

\renewcommand\i{\infty}
\newcommand\ve{\varepsilon}
\newcommand\eps{\varepsilon}

\newcommand\T{0\le t\le T}
\newcommand\1{0\le t\le 1}

\newcommand\p{\mathbb{P}}
\newcommand\N{\mathbb{N}}
\newcommand\E{\mathbb{E}}
\newcommand\R{\mathcal{R}}

\newcommand\RR{\mathbb{R}}

\newcommand\LI{{L^\i (\P)}}
\newcommand\Lzero{{L^0 (\P)}}

\newcommand\Ltwo{{L^2 (\P)}}
\newcommand\Linfty{{L^\infty (\P)}}

\newcommand\LIOT{\infty}
\newcommand\LIOO{\infty}

\newtheorem{theorem}{Theorem}[section]

\newtheorem{definition}[theorem]{Definition}
\newtheorem{lemma}[theorem]{Lemma}
\newtheorem{proposition}[theorem]{Proposition}

\theoremstyle{definition}

\newtheorem{remark}[theorem]{Remark}

\title[A direct Proof of the Bichteler--Dellacherie Theorem]{A direct Proof of the Bichteler--Dellacherie Theorem and connections to Arbitrage}

\begin{document}
\author { Mathias Beiglb\"ock, Walter Schachermayer, Bezirgen Veliyev}

\address{Fakult\"at f\"ur Mathematik, Universit\"at Wien\endgraf
Nordbergstra\ss e 15\\ 1090 Wien, Austria\endgraf\ }
\email{\endgraf mathias.beiglboeck@univie.ac.at,  \endgraf 
walter.schachermayer@univie.ac.at,\endgraf 
bezirgen.veliyev@univie.ac.at. \endgraf}
\thanks{The first author gratefully acknowledges financial support from the Austrian Science
Fund (FWF) under grant  P21209. The second author gratefully acknowledges financial support from the Austrian Science Fund (FWF) under grant P19456, from
the Vienna Science and Technology Fund (WWTF) under grant MA13, from the Christian Doppler Research Association, and from the ERC Advanced Grant.
 The third author gratefully acknowledges financial support from the Austrian Science Fund (FWF) under grant P19456.}

\subjclass[2000]{60G05,  60H05, 91G99 } \keywords{Bichteler--Dellacherie Theorem, Doob--Meyer decomposition, Arbitrage,  Koml\'os' Lemma}

\maketitle

\begin{abstract}
We give an elementary proof of the celebrated Bichteler-Dellacherie Theorem which states that the class of stochastic processes $S$ allowing for a useful integration theory consists precisely of those processes which can be written in the form $S=M+A$, where $M$ is a local martingale and $A$ is a finite variation process. In other words, $S$ is a good integrator if and only if it is a semi-martingale.

We obtain this decomposition rather directly from an elementary discrete-time Doob-Meyer decomposition. By passing to convex combinations we obtain a direct construction of the continuous time decomposition, which then yields the desired decomposition.

As a by-product of our proof we obtain a characterization of semi-martingales in terms of a  variant of \emph{no free lunch}, thus extending a result from \cite{DeSc94}.
\end{abstract}

\section{Introduction}

We fix filtered probability space $(\Omega, \F, (\F_t)_{\T}, \p)$ satisfying the usual conditions.
A {\it simple integrand} is a 
stochastic process $H=(H_t)_{\T}$ of the form 
\begin{align}\label{W2}
H_t = \sum^n_{j=1} f_j \I_{
\rrbracket 
\tau_{j-1} ,\tau_j 
\rrbracket
} (t), \qquad \T,
\end{align}
where $n$ is a finite number, $0=\tau_0 \le \tau_1\le \cdots \le \tau_n =T$ is an increasing sequence of stopping times, 
and $f_j\in L^\i (\Omega, \F_{\tau_{j-1}}, \p)$.

Denote by $\SI =\SI (\Omega, \F, (\F_t)_{\T}, \p)$ the vector space of (equivalence
classes of) simple integrands.

For every bounded, $\F\otimes \mathfrak{B}_{[0,T]}$-measurable function $G: \Omega\times [0,T]\to \RR$ we define
$$\|G\|_\LIOT=\sup_{0\leq t\leq T} \big\| G_t\big\|_\LI$$
so that $\|.\|_\LIOT$ is a norm on $\SI$.
Given a (c\`adl\`ag, adapted) stochastic process $S=(S_t)_{0\leq t\leq T}$
we may well-define the integration operator $I_S:\SI \to L^0 (\Omega, \F,\p)$,
\begin{equation}\label{W3}
I_S\Big(
\sum^n_{j=1} f_j \I_{
\rrbracket 
\tau_{j-1} ,\tau_j 
\rrbracket
} \Big)= \sum^n_{j=1} f_j (S_{\tau_j}-S_{\tau_{j-1}}) =:(H\cdot S)_T.
\end{equation}
Note that only a finite Riemann sum is involved in this definition of an integral, so that we do not (yet) encounter any subtleties of limiting procedures.

However, if we seek to extend this operator to a larger class of integrands by approximation with simple integrands, we have to demand that the operator $I_S$ enjoys some minimal continuity properties. 
A particularly weak requirement is that uniform convergence of a sequence of simple integrands $H^n$ should imply convergence of the integrals $I_S(H^n)$ in probability.  
\begin{definition}
(see, e.g., \cite[page 52]{Prot04}, \cite[page 24]{RoWi00b})
A real-valued, c\`{a}dl\`{a}g, adapted process $S=(S_t)_{\T}$ is called a {\sffamily good integrator} if the integration operator 
$$I_S:\SI \to L^0 (\Omega, \F, \p)$$
is continuous, if we equip $\SI$ with the norm topology induced by $\|\cdot\|_\LIOT$, and $L^0(\Omega,\F,\p)$ with the topology of convergence in probability,
respectively.
\end{definition}
If $S$ is a good integrator, it is possible to extend the operator $I_S$ to the space of all bounded adapted c\`agl\`ad  processes without major technical difficulties (\cite[Capter 2]{Prot04}). 

In other words, the above notion ensures, {\it essentially by definition},
that the procedures involved in extending the integration \eqref{W3} from finite Riemann sums to their appropriate limits, work out properly for a 
good integrator $S$. But of course, the above definition of a good integrator is purely formal, and simply translates the delicacy of the well-definedness of an integral (which involves a limiting procedure) into an equivalent condition.


 The achievements of the Strasbourg school of P.~A.~Meyer and the work of
G.~Mokobodzki culminated in the theorem of Bichteler--Dellacherie (\cite[Theorem 43, Chapter 3]{Prot04}, \cite[Theorem 16.4]{RoWi00b}), which provides an explicit and 
practically useful characterization of the set of processes allowing for a powerful stochastic integration theory.

\begin{theorem}\label{th2}
(\cite{Bich79}, \cite{Bich81}, \cite{Dell80}):
For a real-valued, c\`{a}dl\`{a}g, adapted process $S=(S_t)_{\T}$ the following are equivalent:
\begin{enumerate}
\item $S$ is a good integrator.
\item $S$ may be decomposed as $S=M+A$, where $M=(M_t)_{\T}$ is a local martingale and $A=(A_t)_{\T}$ an adapted process of finite variation. 
\end{enumerate}
We then say $S$ is a  {\sffamily semi-martingale}.
\end{theorem}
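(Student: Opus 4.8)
\emph{The direction $(2)\Rightarrow(1)$ (routine).} Assuming $S=M+A$, split $I_S=I_M+I_A$. The finite variation part is controlled pathwise, $|(H\cdot A)_T|\le\|H\|_{\LIOT}\,\operatorname{Var}(A)_T$, so $\|H^n\|_{\LIOT}\to0$ forces $(H^n\cdot A)_T\to0$ almost surely, hence in probability. For the local martingale part a standard localization reduces matters to a martingale bounded in $\mathcal H^1$, where the Doob and Burkholder--Davis--Gundy inequalities give $\|(H\cdot M^{\sigma_k})_T\|_{\Lone}\le C_k\|H\|_{\LIOT}$; combining this with $\p(\sigma_k<T)\to0$ along the localizing sequence and Chebyshev's inequality yields continuity of $I_S$.

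\emph{$(1)\Rightarrow(2)$, the reductions.} Continuity of $I_S$ is equivalent to $\mathcal C:=\{(H\cdot S)_T:H\in\SI,\ \|H\|_{\LIOT}\le1\}$ being bounded in $L^0$, and since $(H\I_{\rrbracket 0,\tau\rrbracket}\cdot S)_T=(H\cdot S)_\tau$ for every stopping time $\tau$, also the maximal functions $\sup_{\T}|(H\cdot S)_t|$ form an $L^0$-bounded family. First I would split off the large jumps: $J_t:=\sum_{s\le t}\Delta S_s\,\I_{\{|\Delta S_s|>1\}}$ is a well-defined c\`adl\`ag finite variation process (hence a good integrator and a semimartingale), so $S-J$ is again a good integrator with jumps bounded by one, hence locally bounded; patching semimartingale decompositions along a localizing sequence then reduces matters to the case $|S|\le C$. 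Next I would pass to an equivalent measure: applying a Koml\'os/Yan-type lemma to the convex, $L^0$-bounded set $\conv\{|X|:X\in\mathcal C\}\subseteq L^0_+$ yields $\mathbb Q\sim\p$ under which $\mathcal C$ is bounded in $\Lone(\mathbb Q)$. Since convergence in probability is unaffected, and since any $\mathbb Q$-semimartingale is a $\p$-semimartingale (Girsanov for the strictly positive density process together with It\^o's formula), it suffices to produce the decomposition under the extra hypothesis that $\mathcal C$ is bounded in $\Lone$; write $K:=\sup\{\E|X|:X\in\mathcal C\}<\i$.

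\emph{$(1)\Rightarrow(2)$, discrete Doob--Meyer and convex combinations.} I would fix a nested sequence of partitions $\pi_m$ of $[0,T]$ with mesh tending to $0$, and for each $m$ invoke the elementary discrete-time Doob decomposition along $\pi_m$: at grid points, $S_t=S_0+M^m_t+A^m_t$ with predictable part $A^m_t=\sum_{t_j\le t}\E[S_{t_j}-S_{t_{j-1}}\mid\F_{t_{j-1}}]$ and discrete martingale $M^m$, extended to $[0,T]$ by right-continuous interpolation. Testing against the simple integrand equal to $\operatorname{sgn}\big(\E[S_{t_j}-S_{t_{j-1}}\mid\F_{t_{j-1}}]\big)$ on $\rrbracket t_{j-1},t_j\rrbracket$ (sup-norm $\le 1$) shows $\E[\operatorname{Var}(A^m)_T]=\E[(H^m\cdot S)_T]\le K$ uniformly in $m$, so the total variations are bounded in $\Lone$. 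Then I would apply Koml\'os' lemma, with a diagonal argument over the countable set $\bigcup_m\pi_m$, to extract convex combinations $\tilde A^m\in\conv(A^m,A^{m+1},\dots)$ and a process $A$ with $\tilde A^m_q\to A_q$ almost surely for every $q\in\bigcup_m\pi_m$; lower semicontinuity of total variation gives that $A$ is of finite variation, $\E[\operatorname{Var}(A)_T]\le K$. Setting $M:=S-S_0-A$, the matching convex combinations $\tilde M^m=(S-S_0)-\tilde A^m$ of the discrete martingales converge to $M$ along $\bigcup_m\pi_m$, and since each $\tilde M^m$ satisfies the martingale identity on the coarser grids, passing to the limit yields $\E[M_t-M_s\mid\F_s]=0$ for $s<t$ in $\bigcup_m\pi_m$; after a localization $M$ is a local martingale, and $S=S_0+M+A$ is the decomposition sought.

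\emph{The main obstacle.} The hard part will be this last passage to the limit. Koml\'os delivers only almost sure convergence of the $\tilde A^m$, while the families $\{\tilde M^m_t\}_m$ need not be uniformly integrable --- their total variations are controlled only in $\Lone$ --- so the conditional-expectation identity cannot be taken to the limit as it stands. I expect this to require a \emph{simultaneous} localization, stopping $S$, all the $\tilde A^m$, and $A$ at the first time a common majorant of $\operatorname{Var}(A)$ and $\sup_m\operatorname{Var}(\tilde A^m)$ reaches a level $n$, so as to reduce to uniformly integrable families and obtain a local (rather than a true) martingale; arranging this consistently in $m$ and $n$, and separately verifying that $A$ is genuinely of finite variation and that $M$ is c\`adl\`ag, is where the real work lies. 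The integrability reduction to $\Lone$ above is the other key enabling step, and is the first place where Koml\'os/Yan compactness enters the argument.
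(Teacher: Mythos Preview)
Your strategy is plausible and close in spirit to classical treatments, but it is \emph{genuinely different} from the route taken in the paper. The paper's whole point is to avoid change-of-measure techniques: it never passes to an equivalent $\mathbb Q$, and instead routes the implication $(1)\Rightarrow(2)$ through the arbitrage-flavoured condition of \emph{no free lunch with vanishing risk and little investment} (Theorem~\ref{BMT}). Concretely, the paper controls the discrete Doob--Meyer pieces by \emph{stopping} rather than by integrability: a quadratic-variation bound (Lemma~\ref{l18}) gives stopping times $\sigma_n$ making $M^{n,\sigma_n}$ bounded in $L^2$, and a further stopping controls $TV(A^{n,\sigma_n})$ \emph{almost surely} on a set of probability $>1-\ve$. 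Koml\'os is then applied in $L^2$ to the stopped objects, and a delicate construction (Proposition~\ref{ContinuousApp}) produces a single limiting stopping time $\alpha$. By contrast, you first invoke a Yan/Nikishin-type lemma to manufacture $\mathbb Q\sim\p$ with $\mathcal C$ bounded in $L^1(\mathbb Q)$, obtain uniform $L^1$-bounds on $TV(A^m)$ via the sign integrand, and then face the UI problem you correctly flag. What your approach buys is a cleaner variational estimate ($\E[TV(A^m)]\le K$ uniformly); what it costs is the change-of-measure reduction plus the Girsanov--Meyer step ``$\mathbb Q$-semimartingale $\Rightarrow$ $\p$-semimartingale,'' which, while not circular, requires the quadratic-covariation calculus for semimartingales---exactly the machinery the paper wishes to bypass. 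Your acknowledged obstacle (passing the martingale identity through an a.s.\ Koml\'os limit without UI) is real and is where the paper's $L^2$/stopping framework pays off, since Koml\'os in $L^2$ gives norm convergence and hence preserves conditional expectations directly; your proposed fix by simultaneous TV-localization is the right idea but would need to be made precise, in particular reconciling the $m$-dependent grids with a common stopping time and verifying c\`adl\`ag regularity of the limit.
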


The implication $(2)\Rightarrow (1)$ is a straightforward verification. Indeed it is rather trivial that  a c\`adl\`ag, adapted process $A$ with a.s.\ paths of finite variation is a good integrator, where the integral may be defined pathwise. As regards the local martingale part $M$, the assertion that $M$ is a good integrator, is an extension of It{\^o}'s fundamental insight (\cite{Ito44, KuWa67}) that an $L^2$-bounded martingale defines an integration operator which is continuous from $(\SI, \|.\|_{\LIOT})$ to $\Ltwo$. 

 The remarkable implication is $(1) \Rightarrow (2)$ which provides an explicit 
characterization of good integrators.

The main aim of this paper is to give a proof of this implication which is inspired by (no) arbitrage-arguments. We note that our argument does not rely on  the continuous time Doob-Meyer decomposition nor any change of measure techniques. Instead, we shall construct the desired representation rather directly from a discrete time Doob-Meyer decomposition.\footnote{Thus, our proof is --- in spirit --- closely related to the proofs of the continuous time Doob-Meyer Theorem for super-martingales given by \cite{Rao69} (see also \cite{IkWa81}, \cite{KaSh91}) and \cite{Bass96}  (see also \cite{Prot04}). 
} 
As an important by-product we also obtain a direct proof of the decomposition of a locally bounded semi-martingale (see Theorem \ref{BMT} below).

\medskip

Let us now enter the realm of Mathematical Finance. 

Here $S$ models the (discounted) price process of some ``stock'' $S$, say, a share of company XY. People may trade the stock $S$: at time $t$ they can hold $H_t$  units of the stock $S$. Following a \emph{trading strategy} $H=(H_t)_{0\leq t \leq T}$, which is assumed to be a \emph{predictable} process, the accumulated gains or losses up to time $t$ then are given by the random variable 
\begin{align}\label{gfds}(H\cdot S)_t=\int_{0}^t H_u\, dS_u, \quad 0\leq t\leq T.\end{align} 
The intuition is that during an infinitesimal  interval $[u, u+du]$ the strategy $H$ leads to a random gain/loss $H_u \, dS_u$. In the case when the predictable process $H$ is a step function, i.e.\ if $H$ is a simple integrand, the stochastic integral \eqref{gfds} becomes a finite Riemann sum. Hence in this case it is straightforward to justify this infinitesimal reasoning.

 The dream of an investor is the possibility of an \emph{arbitrage}\footnote{The basic axiom of mathematical finance is that arbitrages do not exist: there is no such thing as a free lunch!}. Roughly speaking, this means the existence of a trading strategy, where you are sure not to lose, but where you possibly may  win. Mathematically speaking -- and leaving aside technicalities -- this translates into the existence of a predictable process $H=(H_t)_{0\leq t \leq T}$ such that the negative part $(H\cdot S)_T^-$ of the gains/losses accumulated  up to the terminal date $T$ is zero, while the positive part $(H\cdot S)_T^+$ is not. We now give a technical variant of this intuitive idea of an arbitrage.
 
\begin{definition} \label{def4}
(\cite[section 7]{DeSc94}):
A real-valued, c\`{a}dl\`{a}g, adapted process $S=(S_t)_{\T}$ allows for a {\sffamily free lunch with vanishing risk for simple integrands
} if there is a
sequence $(H^n)^\i_{n=1}$ of simple integrands such that, for $n\to \infty$,
\begin{align}
(H^n\cdot S)_T^+\quad\  &\not \to 0 \quad \mbox{ in probability}. \tag{\emph{FL}}\label{FL}\\
 \sup_{0\leq t\leq T} \big\| (H^n\cdot S)_t^{-} \big\|_\LIOT= \big\| (H^n\cdot S)^{-} \big\|_\LIOT  &\to 0, \tag{\emph{VR}}\label{VR}
\end{align}
Rephrasing the converse, $S$ therefore admits {\sffamily no free lunch with vanishing risk} (NFLVR) for simple integrands if for every sequence $(H^n)^\i_{n=1}\in \SI$ satisfying \eqref{VR} we have
\begin{align}
(H^n\cdot S)_T \to 0 \quad \mbox{ in probability}. \tag{\emph{NFL}}\label{NFL}
\end{align} 
\end{definition}
The Mathematical Finance context allows for the following interpretation:  A {\it free lunch with vanishing risk for simple integrands} 
indicates that $S$ allows for a sequence of trading schemes $(H^n)^\i_{n=1}$, each $H^n$ involving only finitely many rebalancings of the portfolio, such that the losses tend to 
zero in the sense of \eqref{VR}, while the terminal gains \eqref{FL} remain substantial as $n$ goes to infinity.\footnote{The sequence of random variables $((H^n\cdot S)^+_T)_{n=1}^\infty$ does not converge to zero in probability iff there is some $\alpha>0$ such that $\P[(H^n\cdot S)^+_T\geq \alpha]\geq \alpha$, for infinitely many $n\in \N$.
}

It is important to note that the condition \eqref{VR} of \emph{vanishing risk} pertains to the maximal losses of the trading strategy $H^n$ during the entire interval $[0,T]$: if the left hand side of \eqref{VR} equals $\eps_n$ this implies that, with probability one, the strategy $H^n$ \emph{never}, i.e.\ for no $t\in [0,T]$, causes an accumulated loss of more than $\eps_n$. 

Here is the mathematical theorem which gives the precise relation to the notion of semi-martingales.
\begin{theorem}\cite[Theorem 7.2]{DeSc94}\label{th1}
Let $(S_t)_{\T}$ be a real-valued, c\`{a}dl\`{a}g, locally bounded process based on and adapted to a filtered probability space $(\Omega, \F,(\F_t)_{\T}, \p).$
If $S$ satisfies the condition of {\sffamily no free lunch with vanishing risk for simple integrands}, then $S$ is a semi-martingale.
\end{theorem}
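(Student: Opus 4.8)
The plan is to combine the paper's main result, the implication $(1)\Rightarrow(2)$ of Theorem~\ref{th2}, with an argument that transforms the \eqref{NFL} hypothesis into the statement that $S$ is a good integrator. Thus the crux is to prove: \emph{if a locally bounded c\`adl\`ag adapted $S$ satisfies \eqref{NFL} for simple integrands, then $I_S:\SI\to L^0(\P)$ is continuous}. Once that is established, Theorem~\ref{th2}(1)$\Rightarrow$(2) hands us the semi-martingale decomposition $S=M+A$ for free. So Theorem~\ref{th1} reduces to an abstract functional-analytic fact about the map $I_S$.

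To prove that reduction, first I would observe that continuity of the linear operator $I_S$ at the origin (equivalently everywhere, by linearity) means: whenever $\|H^n\|_\LIOT\to 0$ we have $I_S(H^n)=(H^n\cdot S)_T\to 0$ in probability. Here is the key maneuver. Suppose $I_S$ is \emph{not} continuous. Then there is a sequence $G^n\in\SI$ with $\|G^n\|_\LIOT\to 0$ but $(G^n\cdot S)_T\not\to 0$ in probability; passing to a subsequence, there is $\alpha>0$ with $\P[|(G^n\cdot S)_T|\geq\alpha]\geq\alpha$ for all $n$. By splitting into the events where the integral is positive resp.\ negative and replacing $G^n$ by $\pm G^n\I_{A_n}$ for a suitable $\F_T$--predictable (indeed deterministic-time) set — or more simply by multiplying $G^n$ by the sign, which is \emph{not} predictable, so instead one passes to a further subsequence along which either $(G^n\cdot S)_T^+\not\to0$ or $(G^n\cdot S)_T^-\not\to0$ in probability, and in the latter case replaces $G^n$ by $-G^n$ — we may assume $(G^n\cdot S)_T^+\not\to 0$ in probability. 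Now normalise: since $\|G^n\|_\LIOT\to 0$, after scaling we can arrange $\|G^n\|_\LIOT\to 0$ while still $(G^n\cdot S)_T^+\not\to0$. The trouble is that \eqref{VR} demands the \emph{maximal} running loss, not just the terminal value, to vanish uniformly, and a simple integrand with small sup-norm need not have uniformly small running losses along $S$ unless $S$ itself is controlled.

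This is exactly where \emph{local boundedness} of $S$ enters, and I expect this to be the main technical obstacle. I would use a localising sequence of stopping times $(T_k)$ such that $S^{T_k}$ is bounded by a constant $c_k$; on the stochastic interval $\llbracket 0,T_k\rrbracket$ a simple integrand $H$ of sup-norm $\eps$ produces a running loss bounded by (number of jumps)$\times 2c_k\eps$ — not yet uniform, because the number of rebalancings in $G^n$ may blow up. The standard fix is to reduce, by a stopping-time argument, to integrands that are \emph{single jumps}: write $(G^n\cdot S)_T=\sum_j f_j^n(S_{\tau_j^n}-S_{\tau_{j-1}^n})$, and note that if the terminal positive part does not vanish, then for a suitable choice of stopping time $\sigma_n$ (the first time the partial sums $(G^n\cdot S)_t$ reach a positive level, or the first time they drop below a small negative threshold, stopped also at $T_k$), the truncated strategy $\widetilde G^n := G^n\I_{\llbracket 0,\sigma_n\rrbracket}$ has running losses bounded by $\eta + 2c_k\|G^n\|_\LIOT \to \eta$ for any preassigned $\eta>0$, while the positive part of its terminal value is still bounded below in probability. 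A diagonal argument over $k$ and over $\eta\downarrow 0$ then produces a genuine free lunch with vanishing risk, contradicting \eqref{NFL}. Hence $I_S$ is continuous, $S$ is a good integrator, and Theorem~\ref{th2} completes the proof.

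I should flag two points where care is needed. First, the sign/subsequence step must be done so that the resulting sequence stays in $\SI$ (simple integrands with bounded, previsible coefficients): multiplying by $\pm1$ globally is fine, multiplying by an $\F_T$--measurable sign is not, so one genuinely argues along a subsequence realising one fixed sign. Second, the stopping-time truncation $\sigma_n$ must be chosen previsibly-compatibly so that $\widetilde G^n$ is again a simple integrand; since $\sigma_n$ is built from hitting times of the c\`adl\`ag process $(G^n\cdot S)$ and from the localising times $T_k$, and since $G^n$ has only finitely many pieces, $\widetilde G^n$ indeed has the required form \eqref{W2}. Modulo these bookkeeping matters the argument is routine; the one genuinely load-bearing ingredient from outside is the implication $(1)\Rightarrow(2)$ of the Bichteler--Dellacherie Theorem, which the body of the paper supplies.
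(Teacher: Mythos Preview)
Your route and the paper's route are genuinely different, and yours carries a gap at exactly the point you flag as the ``main technical obstacle''.

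\medskip

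\textbf{How the paper proves Theorem~\ref{th1}.} Within this paper, Theorem~\ref{th1} is an immediate corollary of Theorem~\ref{BMT}: if $S$ satisfies NFLVR for simple integrands (Definition~\ref{def4}), then \emph{a fortiori} it satisfies the weaker condition NFLVR-LI of Definition~\ref{NFLVRLI}, since any sequence witnessing a free lunch with vanishing risk \emph{and} little investment is in particular a sequence witnessing a free lunch with vanishing risk. Theorem~\ref{BMT}(1)$\Rightarrow$(2) then yields the semi-martingale decomposition directly. No detour through ``good integrator'' is needed; that is precisely the point of the paper. Your proposed route --- NFLVR $\Rightarrow$ good integrator $\Rightarrow$ semi-martingale via Theorem~\ref{th2} --- is the original argument of \cite{DeSc94}, which the introduction explicitly sets out to replace. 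Moreover, since the paper's own proof of Theorem~\ref{th2}(1)$\Rightarrow$(2) itself passes through Theorem~\ref{BMT} (good integrator trivially implies NFLVR-LI), your chain is strictly longer than the one-line deduction available.

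\medskip

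\textbf{The gap in your argument.} In the step ``NFLVR $\Rightarrow$ good integrator'' you take $G^n$ with $\|G^n\|_\LIOT\to 0$ and $(G^n\cdot S)_T^+\not\to 0$, stop at $\sigma_n$ (the first time $(G^n\cdot S)$ either hits a positive level or drops below $-\eta$), and assert that the truncated strategy still has terminal positive part bounded below in probability. This is not justified. Nothing rules out that on the event $\{(G^n\cdot S)_T\geq \alpha\}$ the process first dips below $-\eta$ before reaching any positive level; on that event $\sigma_n$ is the downcrossing time and $(\widetilde G^n\cdot S)_T\approx -\eta$, so the positive terminal part is lost. Trying to rescue this by considering $-\widetilde G^n$ fails too, since the running \emph{upper} bound of $(G^n\cdot S)$ on $\llbracket 0,\sigma_n\rrbracket$ is not controlled, so \eqref{VR} is not available for $-\widetilde G^n$. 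Closing this gap requires a substantially more careful construction; the paper sidesteps the issue entirely by working with the NFLVR-LI condition, whose extra hypothesis $\|H^n\|_\LIOT\to 0$ is exactly what makes the overshoot estimates in Section~3 go through.
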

In this theorem we only get one implication, as opposed to the characterization of a semi-martingale in the Bichteler--Dellacherie Theorem \ref{th2}.
Indeed, trivial examples show that a semi-martingale $S=(S_t)_{\T}$ does not need to satisfy the condition of ``no free lunch with vanishing risk for simple
integrands''. For example, consider  $S_t=t$ and $H^n_t\equiv H_t\equiv 1$, for $\T$. Then, for each $n\in\N$, we have that $(H^n\cdot S)_T=T$ which certainly
provides a ``free lunch with vanishing risk''. 

The proof of Theorem \ref{th1} which is  given in (\cite[Th 7.2]{DeSc94}) relies on the Bichteler--Dellacherie Theorem. The starting point of the present paper was
the aim to find a proof of Theorem \ref{th1} which does not rely on this theorem. Rather we wanted to use Komlos' lemma and its ramifications which allows
in rather general situations to pass to limits of sequences of functions and/or processes by forming convex combinations.

It came as a pleasant surprise that not only it is possible to prove Theorem \ref{th1} in this way, but that these arguments also yield a constructive proof
of the Bichteler--Dellacherie Theorem which is based on an intuitive and seemingly naive   idea. 

To relate the themes of \ref{th1} and the Bichteler--Dellacherie Theorem \ref{th2} we introduce for the context of this paper the
following definition which combines the two theorems.

\begin{definition}\label{NFLVRLI}
Given a process $S=(S_t)_{\T}$, we say that $S$ allows for a {\sffamily free lunch with vanishing risk and little investment}, if there is a sequence $(H^n)^\i_{n=1}$
of simple integrands as in Definition \ref{def4} above, satisfying \eqref{FL}, \eqref{VR}, and in addition  
\begin{align}
\lim_{n\to \infty} \| H^n \|_\infty = 0. \tag{\emph{LI}}\label{SI}
\end{align}
\end{definition}

The finance interpretation of \eqref{SI} above is that, on top of the requirements of {\it free lunch with vanishing risk}, 
the holdings $H^n_t$ in the stock $S$ is small when $n$ tends to infinity, a.s.\ for all $\T$.
Speaking loosely in more economic terms: $S$ allows for a {\it free lunch with vanishing risk  and little investment} if there are strategies which are almost an arbitrage and which are only involve the holding (or short-selling) of a few stocks. 

We may resume our findings in the following theorem which combines and strengthens the content of  Theorem \ref{th1} and the Bichteler--Dellacherie Theorem \ref{th2}.

\begin{theorem}\label{BMT}
For a locally bounded, real valued, c\`{a}dl\`{a}g, adapted process $S=(S_t)_{\T}$ the following are equivalent. 
\begin{enumerate}
\item $S$ admits {\sffamily no free lunch with vanishing risk and little investment}, i.e., for any sequence $H^n\in\SI$ with
$\lim_n \big\| (H^n\cdot S)^{-} \big\|_\LIOT=\lim_n \| H^n \|_\LIOT = 0$ we find that $ \lim_n (H^n\cdot S)_T^+=0$ in probability.
\item  $S$ is a semi-martingale in the sense of Theorem \ref{th2} (2).  
\end{enumerate}
\end{theorem}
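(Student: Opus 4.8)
The plan is the following. The implication $(2)\Rightarrow(1)$ is immediate and does not even invoke \eqref{VR}: if $S=M+A$ is a semi-martingale then, by the elementary implication $(2)\Rightarrow(1)$ of Theorem~\ref{th2}, $S$ is a good integrator, so $I_S$ is continuous from $(\SI,\|\cdot\|_\LIOT)$ to $\Lzero$; hence $\|H^n\|_\LIOT\to0$ already forces $(H^n\cdot S)_T\to0$ in probability, a fortiori $(H^n\cdot S)_T^+\to0$. So only $(1)\Rightarrow(2)$ requires work. Since $(H\cdot S^\rho)=(H\I_{\llbracket 0,\rho\rrbracket}\cdot S)$ with $H\I_{\llbracket 0,\rho\rrbracket}\in\SI$ and $\|H\I_{\llbracket 0,\rho\rrbracket}\|_\LIOT\le\|H\|_\LIOT$ for every stopping time $\rho$, condition (1) is inherited by stopped processes; as ``being a semi-martingale'' is a local property, we may and do assume henceforth that $S$ is bounded, say $|S|\le1$.

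The starting observation is that (1) forces the discrete quadratic variations of $S$ to be bounded in probability, uniformly over partitions --- a manifestation of the classical principle that absence of arbitrage bounds the attainable gains. Concretely, fix a partition $\pi:0=t_0<\dots<t_N=T$, write $[S]^\pi_{t_i}:=\sum_{j\le i}(S_{t_j}-S_{t_{j-1}})^2$, and let $\rho$ be the first partition point at which $[S]^\pi$ reaches a level $c>1$ (and $\rho:=T$ otherwise). The simple integrand $H:=-\tfrac2c\,S_{t_{j-1}}$ on $\rrbracket t_{j-1},t_j\rrbracket$, stopped at $\rho$, satisfies, by the telescoping identity $\sum_{j\le i}S_{t_{j-1}}(S_{t_j}-S_{t_{j-1}})=\tfrac12\big(S_{t_i}^2-S_0^2-[S]^\pi_{t_i}\big)$,
\[
(H\cdot S)_t=\tfrac1c\big([S]^\pi_{t\wedge\rho}+S_0^2-S_{t\wedge\rho}^2\big),
\]
whence $\|H\|_\LIOT\le 2/c$, $\|(H\cdot S)^-\|_\LIOT\le 1/c$, and $(H\cdot S)_T\ge 1-1/c$ on the event $\{[S]^\pi_T\ge c\}$. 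Thus, if the discrete quadratic variations were not bounded in probability uniformly in $\pi$, rescaling along a sequence $c_k\to\infty$ would produce simple integrands contradicting (1) --- and the \emph{little investment} clause \eqref{SI} is precisely what this rescaling delivers. By the same mechanism, but now running the integrand $\operatorname{sign}(a^\pi_j)$, where $a^\pi_j=\E[S_{t_j}-S_{t_{j-1}}\mid\F_{t_{j-1}}]$, and controlling the accompanying martingale part through the quadratic-variation bound just obtained (via Lenglart's inequality) together with a stopping argument that keeps the running loss bounded, one derives that the total variations $\sum_j|a^\pi_j|$ of the \emph{discrete drifts} are bounded in probability, uniformly over partitions.

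Now I would fix partitions $\pi^k:0=t^k_0<\dots<t^k_{N_k}=T$ with $\pi^k\subseteq\pi^{k+1}$ and $\bigcup_k\pi^k$ dense in $[0,T]$, and form the elementary discrete-time Doob--Meyer decomposition: at the points of $\pi^k$,
\[
S=S_0+M^k+A^k,\qquad A^k_{t^k_i}=\sum_{j\le i}a^k_j,\qquad M^k_{t^k_i}=\sum_{j\le i}\big((S_{t^k_j}-S_{t^k_{j-1}})-a^k_j\big),
\]
with $A^k$ predictable and $M^k$ a martingale for the discrete filtration $(\F_{t^k_i})_i$. By the previous paragraph, $\operatorname{Var}(A^k)$ and $[S]^{\pi^k}$ are bounded in probability uniformly in $k$, hence so are $A^k_t$ and $M^k_t$ for each $t$. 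Applying the $L^0$-version of Koml\'os' lemma (\cite[Lemma~A1.1]{DeSc94}) along a single sequence of forward convex combinations $\bar A^k\in\operatorname{conv}(A^k,A^{k+1},\dots)$, obtained by a diagonal procedure over the countable set $D:=\bigcup_k\pi^k$ --- simultaneously for the random variables $\big(A^k_t\big)_k$, $t\in D$, and for $\big(\operatorname{Var}(A^k)\big)_k$ --- I get $\bar A^k_t\to A_t$ almost surely for every $t\in D$ while $\operatorname{Var}(\bar A^k)$ converges almost surely to a finite limit, so that $A$ has finite variation along $D$. Put $\bar M^k:=S-S_0-\bar A^k$, so $\bar M^k_t\to M_t:=S_t-S_0-A_t$ for $t\in D$. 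Since the $\pi^k$ refine, $\E[\bar M^k_t\mid\F_s]=\bar M^k_s$ whenever $s<t$ lie in $\pi^k$; localising by means of the quadratic-variation bound --- stopping so that the relevant $\bar M^k$ become bounded in $L^2$ by a constant --- upgrades the almost sure convergence to convergence in $L^1$, so this identity passes to the limit and exhibits $M|_D$ as a martingale on each localising interval. The usual conditions then furnish a c\`adl\`ag modification of $M$, which is thus a local martingale, and $A:=S-S_0-M$ is c\`adl\`ag, adapted and of finite variation because $D$ is dense and the variation of $A$ along $D$ is finite. Hence $S=S_0+M+A$ is a semi-martingale.

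Two points carry the weight of the argument. The first is the bound on $\operatorname{Var}(A^k)$: it must be extracted using only integrands of vanishingly small $\LIOT$-norm (the \eqref{SI} constraint), which is what forces the stopping-and-rescaling bookkeeping, and one must check that the stopped drift variation still exhausts the full variation in the limit; here the quadratic-variation bound is indispensable for keeping the martingale part of the ``bet on the drift'' strategy under control. The second is the last step: turning the Koml\'os limit of the processes $\bar M^k$ --- which are martingales only at finitely many times, and only after passing to convex combinations and a subsequence --- into a genuine c\`adl\`ag local martingale. This is where the uniform integrability supplied by the quadratic-variation bound is used to transfer the conditional-expectation identity through the limit, and where the localisation, in the spirit of the constructive proofs of the Doob--Meyer theorem cited in the footnote above, has to be assembled with care.
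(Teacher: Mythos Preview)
Your overall strategy---discrete Doob--Meyer decomposition along refining partitions, NFLVRLI to bound quadratic variation and drift total variation in $L^0$, then Koml\'os to extract limits---is precisely the route the paper takes, and your treatment of $(2)\Rightarrow(1)$ is correct. But the sentence ``localising by means of the quadratic-variation bound --- stopping so that the relevant $\bar M^k$ become bounded in $L^2$ by a constant --- upgrades the almost sure convergence to convergence in $L^1$'' hides the main difficulty of the whole proof, and as written it does not go through.

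The issue is that your bounds on $TV(A^k)$ and $[S]^{\pi^k}$ are only in $L^0$, so to make the $\bar M^k$ bounded in $L^2$ (or merely uniformly integrable) you must stop. Any natural stopping rule---the first time the running discrete quadratic variation along $\pi^k$, or the running variation of $\bar A^k$, exceeds a level $C$---depends on $k$. After stopping you have bounded martingales $\bar M^{k,\tau^k}$, but at \emph{different} stopping times $\tau^k$; you cannot pass $\E[\bar M^{k,\tau^k}_t\mid\F_s]=\bar M^{k,\tau^k}_s$ to the limit because the two sides live on different stochastic intervals for different $k$. Nor can you replace the $\tau^k$ by a single stopping time chosen from the limiting objects $A$ or $V$: you would then need $|\bar A^k_{t\wedge\tau}|\le C$ uniformly in $k$, and almost-sure convergence $\bar A^k_t\to A_t$ on a countable set does not deliver that (the dominating random variable $\sup_k V^k$ is finite a.s.\ but need not be integrable, and the event $\{\sup_k V^k\le C\}$ is not $\F_s$-measurable).

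The paper resolves this in two moves that your sketch omits. First (Proposition~\ref{DiscreteApp}) it stops each discrete decomposition at a time $\rho_n$ so as to obtain \emph{deterministic} bounds $TV(A^{n,\rho_n})\le C$ and $\|M^{n,\rho_n}_1\|_{\Ltwo}^2\le C$, at the price $\P(\rho_n<\infty)<\eps$. Second (Proposition~\ref{ContinuousApp}) it applies Koml\'os not to the $A^n$ or $M^n$ but to the \emph{indicator processes} $R^n=\I_{\llbracket 0,\rho_n\wedge 1\rrbracket}$: the convex combinations $\R^n=\sum\mu_i^n R^i$ converge at the terminal time, and the first time $\R^n$ falls below $\tfrac12$ furnishes, after a further subsequence, a \emph{single} stopping time $\alpha$ with $\P(\alpha<\infty)\le 4\eps$. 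Multiplying by $(\R^n)^{-1}\I_{\llbracket 0,\alpha\rrbracket}$ recovers $S^\alpha$ as a sum $\mathcal M^n+\mathcal A^n$ with the deterministic $L^2$- and $TV$-bounds intact. Only then, with all processes stopped at the same $\alpha$ and genuinely $L^2$-bounded, does the paper apply the $L^2$-Koml\'os lemma and pass the martingale identity through the limit.

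In short, the device for amalgamating the $k$-dependent localizations into a single one is the crux of the argument, and it is missing from your proposal; without it the final step is a gap rather than a routine localization.
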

In the case of general processes $S$, which are not necessarily locally bounded, Theorem \ref{BMT} does not hold true any more. Indeed, \cite[Example 7.5]{DeSc94} provides an adapted c\`{a}dl\`{a}g process $S=(S_t)_{0\leq t \leq 1}$ which is not a semi-martingale and for which every simple process $H\in \SI$ satisfying 
$$ \left\| (H^n\cdot S)^{-} \right\|_{\LIOT} \leq 1$$ 
 is constant. Therefore, $S$ trivially verifies the condition of  {\sffamily no free lunch with vanishing risk} (and in particular {\sffamily no free lunch with vanishing risk and little investment}). 

\medskip


But by appropriately altering the condition \eqref{VR} above, we can also formulate a theorem which is analogous to Theorem \ref{BMT} and which implies, in particular, the classical theorem of Bichteler-Dellacherie in its general setting.
\begin{theorem}\label{UBMT}
For a real valued, c\`{a}dl\`{a}g, adapted process $S=(S_t)_{\T}$ the following are equivalent.
\begin{enumerate}
\item For any sequence of simple integrands $H^n$ with $\lim_{n} \| H^n \|_\LIOT=0$ and 
\begin{align*}
\lim_n \sup_{0\leq t\leq T} \left( (H^n\cdot S)_t\right)^{-}=0 \mbox{ in probability}
\end{align*}
 we find that $ lim_n (H^n\cdot S)_T^+=0$ in probability too.
\item $S$ is a semi-martingale in the sense of Theorem \ref{th2} (2).
\end{enumerate}
\end{theorem}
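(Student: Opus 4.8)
The plan is to reduce Theorem~\ref{UBMT} to the already-established Theorem~\ref{BMT} by peeling off the large jumps of $S$, which form an adapted process of finite variation, thereby obtaining a locally bounded remainder to which Theorem~\ref{BMT} can be applied. The implication $(2)\Rightarrow(1)$ requires nothing beyond the easy half of Theorem~\ref{th2}: if $S$ is a semi-martingale then it is a good integrator, so $I_S$ is continuous from $(\SI,\|\cdot\|_\LIOT)$ to $L^0(\p)$, and hence $\|H^n\|_\LIOT\to 0$ already forces $(H^n\cdot S)_T=I_S(H^n)\to 0$ in probability, in particular $(H^n\cdot S)_T^+\to 0$ in probability --- the hypothesis on $\sup_{0\le t\le T}\big((H^n\cdot S)_t\big)^-$ is not even used.

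For $(1)\Rightarrow(2)$ I would first observe that, $S$ being c\`adl\`ag, for a.e.\ $\omega$ the path $t\mapsto S_t(\omega)$ has only finitely many jumps of modulus larger than $1$ on $[0,T]$, so that
\[
A^{\mathrm{big}}_t:=\sum_{0<s\le t}\Delta S_s\,\I_{\{|\Delta S_s|>1\}},\qquad 0\le t\le T,
\]
defines a c\`adl\`ag, adapted process whose pathwise total variation $\mathrm{Var}_{[0,T]}(A^{\mathrm{big}})$ is a.s.\ finite. Set $\bar S:=S-S_0-A^{\mathrm{big}}$. Then $\bar S_0=0$ and $|\Delta\bar S_t|=|\Delta S_t|\,\I_{\{|\Delta S_t|\le 1\}}\le 1$ for every $t$, so, stopping $\bar S$ at $\sigma_k:=\inf\{t:|\bar S_t|\ge k\}$, the process $\bar S^{\sigma_k}$ is bounded by $k+1$ while $\sigma_k\uparrow\infty$ a.s.; hence $\bar S$ is locally bounded. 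Since $S=\bar S+(S_0+A^{\mathrm{big}})$ with $S_0+A^{\mathrm{big}}$ adapted and of finite variation, it suffices to show that $\bar S$ is a semi-martingale.

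The core of the argument is then to transfer condition~(1) from $S$ to $\bar S$. Given $H^n\in\SI$ with $\|H^n\|_\LIOT\to 0$ and $\sup_{0\le t\le T}\big((H^n\cdot\bar S)_t\big)^-\to 0$ in probability, one has $(H^n\cdot S_0)_t\equiv 0$ and, a.s., $\sup_{0\le t\le T}|(H^n\cdot A^{\mathrm{big}})_t|\le\|H^n\|_\LIOT\cdot\mathrm{Var}_{[0,T]}(A^{\mathrm{big}})$, which tends to $0$ in probability; hence $\sup_{0\le t\le T}|(H^n\cdot S)_t-(H^n\cdot\bar S)_t|\to 0$ in probability. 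Consequently $\sup_{0\le t\le T}\big((H^n\cdot S)_t\big)^-\to 0$ in probability, so~(1) applied to $S$ yields $(H^n\cdot S)_T^+\to 0$ in probability, and since $(H^n\cdot S)_T-(H^n\cdot\bar S)_T\to 0$ in probability this gives $(H^n\cdot\bar S)_T^+\to 0$ in probability. Thus $\bar S$ satisfies condition~(1) of Theorem~\ref{UBMT}; a fortiori it satisfies condition~(1) of Theorem~\ref{BMT}, because $\|(H\cdot\bar S)^-\|_\LIOT\to 0$ implies $\sup_{0\le t\le T}\big((H\cdot\bar S)_t\big)^-\to 0$ in probability. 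As $\bar S$ is locally bounded, Theorem~\ref{BMT} shows that $\bar S$, and hence $S$, is a semi-martingale.

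The only genuinely delicate point I anticipate is this transfer step: although $A^{\mathrm{big}}$ may carry arbitrarily large total variation, it is precisely the ``little investment'' assumption $\|H^n\|_\LIOT\to 0$ that makes the correction term $H^n\cdot A^{\mathrm{big}}$ uniformly (in $t$) negligible in probability, and this is what lets both the vanishing-risk hypothesis and the free-lunch conclusion pass between $S$ and $\bar S$. The remaining verifications --- adaptedness, finite variation and jump structure of $A^{\mathrm{big}}$, local boundedness of $\bar S$, and the elementary fact that $\LIOT$-vanishing risk implies in-probability vanishing risk --- are routine, and everything else rests on the easy half of the Bichteler--Dellacherie Theorem together with the already-proven Theorem~\ref{BMT}.
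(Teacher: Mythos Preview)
Your proposal is correct and follows essentially the same route as the paper: peel off the large jumps into a finite-variation process, observe that the remainder is locally bounded, transfer the no-free-lunch condition using that $\|H^n\|_\LIOT\to 0$ makes $H^n\cdot A^{\mathrm{big}}$ uniformly small in probability, and then invoke Theorem~\ref{BMT}. The only cosmetic differences are that the paper verifies condition~(1) of Theorem~\ref{BMT} for $\bar S$ directly (starting from the $\LIOT$-vanishing-risk hypothesis) rather than first establishing the a priori stronger condition~(1) of Theorem~\ref{UBMT} for $\bar S$ as you do, and that the paper does not bother to spell out $(2)\Rightarrow(1)$.
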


\begin{remark}
We also mention the interesting paper \cite{KaPl10}. In the setting of a non-negative process $S$, it is shown that $S$ is a semi-martingale if and only if it satisfies a weakened NFLVR-condition. Moreover it is pointed out in \cite{KaPl10} what has to be altered to include the case where $S$ is just locally bounded from below. The authors also succeed to avoid the Bichteler-Dellacherie Theorem, which is replaced by the continuous time Doob-Meyer Theorem.
\end{remark}

Finally, we  thank Dirk Becherer and Johannes Muhle-Karbe for useful remarks.

\section{The idea of the proof}

We consider a c\`{a}dl\`{a}g, real-valued, adapted process $S=(S_t)_{\T}.$ We want to decide whether $S$ is a semi-martingale, 
and whether $S$ allows for a ``free lunch with vanishing risk and little investment''.
We only consider the finite horizon case, where from now on we normalize to $T=1$; the extension to the infinite horizon case is straight-forward
(see \cite{Prot04}, \cite{RoWi00b}). We also assume that $S_0=0$.

We start with the situation when $S$ is locally bounded and shall discuss the general case later.

Noting the fact that being a semi-martingale is a local property, we may and do assume by stopping that $S$ is uniformly bounded, 
say $\|S\|_\i \le 1$ (compare \cite{DeSc94}
).
For $n\in\N$ consider the discrete process $S^n=(S_{\frac{j}{2^n}})^{2^n}_{j=0}$ obtained by sampling the process $S$ at the $n$'th dyadic points. The process $S^n$ may
be uniquely decomposed into its Doob--Meyer components
$$ S^n=M^n+A^n$$
where $(M^n_{\frac{j}{2^n}})^{2^n}_{j=0}$ is a martingale and $(A^n_{\frac{j}{2^n}})^{2^n}_{j=0}$ a predictable process with respect to the filtration 
$(\F_{\frac{j}{2^n}})^{2^n}_{j=0}$: indeed, letting $A^n_0=0$ it suffices to define 
\begin{align}\label{W13}
A^n_{\frac{j}{2^n}} -A^n_{\frac{j-1}{2^n}} &=\E \big[S_{\frac{j}{2^n}}-S_{\frac{j-1}{2^n}} | \F_{\frac{j-1}{2^n}}\big], \qquad &j=1,\ldots , 2^n, 
\\
M^n_j &=S^n_j -A^n_j, \qquad & j=0,\ldots, 2^n.\label{W13a}
\end{align}
Observe that we do not have any integrability problems in \eqref{W13} as $S$ is bounded.

The idea of our proof is -- speaking very roughly and somewhat oversimplifying -- to distinguish two cases.

{\it Case 1: The processes $(M^n)^\i_{n=1}$ and $(A^n)^\i_{n=1}$ remain bounded} (in a sense to be clarified below).
In this case we shall apply Komlos type arguments to pass to limiting processes $M=\lim_{n\to\i}M^n$ and 
$A=\lim_{n\to \i} A^n$ which then will turn out to be a local martingale and a finite variation process 
(in continuous time) with respect to the filtration $(\F_t)_{\T}$. Hence in this case we find that $S$ is a 
semi-martingale in the sense of Theorem \ref{th2} (2).

{\it Case 2: The processes $(M^n)^\i_{n=1}$ and/or $(A^n)^\i_{n=1}$ do not remain bounded.}
In this case we shall construct a sequence of simple integrands $(H^k)^\i_{k=1}=\left( (H^k_t)_{\1}\right) ^\i_{k=1}$ for the process $(S_t)_{\1}$ which yield
a {\it free lunch with vanishing risk and little investment}. Here is some finance intuition why such a construction should be possible: under the assumption
of Case 2 we may find a sequence $\ve_n >0$ tending to zero such that $(\ve_n M^n)^\i_{n=1}$ and $(\ve_n A^n)^\i_{n=1}$ still do not ``remain bounded''. Noting
that $\|\ve_n M^n +\ve_n A^n\|_\i = \|\ve_n S\|_\i \le \ve_n$ we get an unbounded sequence $(\ve_nM^n)^\i_{n=1}$ of local martingales and/or an unbounded sequence
$(-\ve_nA^n)^\i_{n=1}$ of predictable processes which are close to each other in the uniform topology. Oversimplifying things slightly, this may be interpreted that \emph{the predictable 
process $-A^n$ traces closely the martingale $M^n$.} This ability of nearly reproducing the random movements of the martingale $M^n$ by the predictable movements
of the process $A^n$ should enable a smart investor to achieve a free lunch by forming simple integrands $(H^k)^\i_{k=1}$ which can be chosen such that $\lim_{k\to \i}
\|H^k\|_\i =0$.

\medskip

Of course, this is only a very crude motivation for the arguments in the next section, where we have to be more precise what we mean by ``to remain bounded'' (in the sense of quadratic variation or total variation, in the sense of
$L^\i, L^2,$ or $L^0$, etc etc) and where we have to do a lot of stopping and passing to convex combinations to make the above ideas mathematically rigorous. The crucial issue
is that a successful completion of the above program will simultaneously yield proofs for the Bichteler--Dellacherie Theorem (Theorem \ref{th2}) as well as for Theorem \ref{th1}.
Indeed, it will prove Theorems \ref{BMT} and \ref{UBMT} which contain these theorems.

\section{Two preliminary decomposition results.}

In this section we give two auxiliary results which are somewhat technical but already establish the major portion of our proof of the Bichteler-Dellacherie Theorem.

\begin{proposition}\label{DiscreteApp}
Let $S=(S_t)_{\1}$ be a c\`{a}dl\`{a}g, adapted process satisfying $S_0=0$, $\| S\|_\LIOO\leq 1$ and   {\sffamily no free lunch with vanishing risk and little investment}.
Denote by $A^n$ and $M^n$ the discrete time Doob decompositions as in \eqref{W13} resp.\ \eqref{W13a}.

For $\eps>0$, there exist a constant $C>0$ and a sequence of $\{\tfrac j{2^n}\}_{j=1}^{2^n}\cup \{\infty\}$-valued stopping times $(\rho_n)_{n=1}^\infty$
 such that $\P(\varrho_n<\infty)<\eps$, and the  stopped processes $A^{n,\varrho_n}=(A^n_{\frac j{2^n}\wedge \rho_n})_{j=1}^{2^n}$, $M^{n,\varrho_n}=(M^n_{\frac j{2^n}\wedge \rho_n})_{j=1}^{2^n}$ satisfy 
\begin{align}
&TV(A^{n,\varrho_n})= \sum^{2^n (\varrho_n \wedge 1)}_{j=1} \left| A^n_{\frac{j}{2^n}} - A^n_{\frac{j-1}{2^n}}\right|&\leq C, \\
 &\|M_1^{n,\varrho_n}\|^2_\Ltwo= \| M^n_{\varrho_n \wedge 1}\|^2_\Ltwo &\leq C.
\end{align}
\end{proposition}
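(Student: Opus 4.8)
My strategy is contrapositive: I will show that if no such constant $C$ and stopping times $\rho_n$ exist, then $S$ admits a free lunch with vanishing risk and little investment. So assume that for some fixed $\eps>0$, for every $C>0$ and every choice of a sequence $(\rho_n)$ with $\P(\rho_n<\infty)<\eps$, at least one of the two bounds $TV(A^{n,\rho_n})\le C$ or $\|M^{n,\rho_n}_1\|^2_{\Ltwo}\le C$ fails along a subsequence. The first observation is that the \emph{natural} candidates for $\rho_n$ are the hitting times controlling the growth of $A^n$ and of $M^n$: set, for a level $C$, $\sigma^A_{n,C}=\inf\{\tfrac j{2^n}: \sum_{i\le j}|A^n_{i/2^n}-A^n_{(i-1)/2^n}|\ge C\}$ and $\sigma^M_{n,C}=\inf\{\tfrac j{2^n}: (M^n_{j/2^n})^2\ge C\}$ (with $\inf\emptyset=\infty$), and let $\rho_{n,C}=\sigma^A_{n,C}\wedge\sigma^M_{n,C}$. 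By definition $TV(A^{n,\rho_{n,C}})\le C+\|A^n_{\sigma^A}-A^n_{\sigma^A-}\|$; but the jumps of $A^n$ are conditional expectations of increments of the bounded process $S$, hence bounded by $2$, so $TV(A^{n,\rho_{n,C}})\le C+2$, and similarly $\|M^{n,\rho_{n,C}}_1\|^2_{\Ltwo}\le C + (\text{a bounded jump contribution})$, where the jump of $M^n$ is at most $\|S\|_\i + 2\le 3$ in absolute value. So the stopped processes are automatically controlled on $\{\rho_{n,C}=\infty\}$; the only way the Proposition can fail is that $\P(\rho_{n,C}<\infty)\ge\eps$ for every fixed $C$ and infinitely many $n$.

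The heart of the argument is then to manufacture a free lunch out of this failure. On the event $\{\rho_{n,C}<\infty\}$ one of the two processes has had a ``large excursion'' by time $\rho_{n,C}$: either $A^n$ has accumulated total variation $\ge C$, or $M^n$ has reached level $\sqrt C$. Since $M^n+A^n=S^n$ is bounded by $1$, a large excursion of $A^n$ forces an equally large excursion of $-M^n$, and vice versa — \emph{the predictable process $-A^n$ traces the martingale $M^n$}, as the heuristics in Section~2 anticipate. I would now build the simple integrand $H^{n}$ by betting on $S$ proportionally to the predictable direction of motion of $A^n$: roughly $H^n$ at step $j$ is $\tfrac1C\,\mathrm{sgn}(A^n_{j/2^n}-A^n_{(j-1)/2^n})$, stopped at $\rho_{n,C}$ (or a slight variant thereof, possibly after first passing to convex combinations via Komlós to tame the martingale part). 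The gain $(H^n\cdot S)_1$ splits, using $S^n=M^n+A^n$, into $(H^n\cdot A^n)_1$, which by construction is $\ge \tfrac1C\,TV(A^{n,\rho_{n,C}})\gtrsim 1$ on the part of $\{\rho_{n,C}<\infty\}$ where $A^n$ ran away, plus a martingale term $(H^n\cdot M^n)_1$; since $\|H^n\|_\i\le\tfrac1C\to 0$ and $M^n$ is controlled up to $\rho_{n,C}$, the martingale term is small in $\Ltwo$ (by It\^o's $L^2$-isometry for the discrete stochastic integral, using the variance bound $\le C$ of the stopped martingale, giving $\|(H^n\cdot M^n)_1\|_\Ltwo\lesssim \tfrac1{\sqrt C}$). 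Hence $(H^n\cdot S)_1$ is bounded below by a quantity which is $\gtrsim 1$ on a set of probability $\ge$ (a fixed fraction of) $\eps$, minus a term going to $0$ — this delivers both \eqref{VR} (with $\|H^n\|_\i\to 0$, i.e.\ \eqref{SI}) and \eqref{FL}, the required free lunch. The case where it is $M^n$ rather than $A^n$ that runs away is symmetric: bet in the direction $-\mathrm{sgn}$ of the increments of $M^n$, which equals the direction of the increments of $A^n$ up to the bounded $S$-increment, so the same integrand essentially works; one keeps the subsequence on which a \emph{fixed} one of the two alternatives occurs with probability $\ge\eps/2$ infinitely often.

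The main obstacle I anticipate is handling the risk condition \eqref{VR}, i.e.\ controlling $\sup_{0\le t\le 1}(H^n\cdot S)_t^-$ uniformly, not merely the terminal value: the naive sign-following integrand can incur intermediate losses when $A^n$ temporarily reverses direction, and these must be shown to be $o(1)$ uniformly in $t$. This is where one genuinely needs the stopping at $\rho_{n,C}$ together with the smallness $\|H^n\|_\i\le 1/C$, and possibly a maximal inequality for the (stopped, $L^2$-bounded) martingale integral $H^n\cdot M^n$ to bound $\sup_t|(H^n\cdot M^n)_t|$ — while the $A^n$-part is monotone in $t$ along the chosen direction up to sign changes, whose cumulative effect is bounded by passing to a further convex combination / refining the stopping so that $TV$ before the runaway stays $\le$ a constant. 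A secondary technical point is the interplay between the dyadic time grids as $n$ varies and the continuous-time $\sup_{0\le t\le 1}$: since each $H^n$ lives on a single grid and $(H^n\cdot S)$ is piecewise constant between grid points, $\sup_t$ reduces to a maximum over the grid, so no real difficulty arises there. Assembling these estimates and then unwinding the contrapositive yields the Proposition.
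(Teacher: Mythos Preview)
Your outline matches the paper's proof in the case where the total variation of $A^n$ is the quantity that blows up: the sign-of-$\Delta A^n$ integrand, scaled and stopped, is exactly what the paper uses in its third lemma, and your treatment of the \eqref{VR} condition via Doob's maximal inequality and further stopping is also what the paper does there.

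The genuine gap is the other case. Your claim that ``the case where it is $M^n$ rather than $A^n$ that runs away is symmetric'' is not correct. Betting on $-\mathrm{sgn}(\Delta M^n_j)$ is illegal because martingale increments are not predictable, and your fallback --- that $\mathrm{sgn}(\Delta M^n_j)$ agrees with $-\mathrm{sgn}(\Delta A^n_j)$ ``up to the bounded $S$-increment'' --- fails: the $S$-increments can have size up to $2$, comparable to the $A$-increments, so there is no sign relation. More fundamentally, with your stopping time $\rho_{n,C}=\sigma^A_{n,C}\wedge\sigma^M_{n,C}$ you get $\|M^{n,\rho_{n,C}}_1\|_{\Ltwo}\lesssim\sqrt C$ (from the pointwise bound), but on the event $\{\sigma^M_{n,C}<\sigma^A_{n,C}\}$ you only get $TV(A^{n,\rho_{n,C}})\gtrsim\sqrt C$ (from $|A^n_{\sigma^M}|\ge|M^n_{\sigma^M}|-1$). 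Thus the $A$-gain and the $M$-noise are of the \emph{same} order $\sqrt C$, and no rescaling produces a free lunch: with $\|H^n\|_\infty=1/\sqrt C$ the gain is $\approx 1$ but so is the $\Ltwo$-norm of the martingale part; with $\|H^n\|_\infty=1/C$ the martingale part vanishes but so does the gain.

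The paper breaks this circularity by inserting an independent first step (its Lemma~\ref{l18}): using the \emph{different} integrand $h^n_t=-\sum_j S_{(j-1)/2^n}\I_{](j-1)/2^n,j/2^n]}(t)$ and the identity $(h^n\cdot S)_1=\tfrac12 QV^n+\tfrac12(S_0^2-S_1^2)$, the NFLVR\,+\,LI hypothesis forces the discrete quadratic variation $QV^n$ of $S$ to be bounded in $\Lzero$. Stopping at the first time $QV^n$ exceeds a fixed level $c_1$ then yields, via the orthogonality in the Doob decomposition, a bound $\|M^{n,\sigma_n(c_1)}_1\|_{\Ltwo}^2\le c_1$ that is \emph{uniform in $n$ and independent of any information about $A^n$}. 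Only after this does the paper run your sign-of-$\Delta A^n$ argument (its third lemma), and now the martingale contribution is genuinely of lower order. Your proposal is missing this QV step, and without it the $M$-runaway case cannot be closed.
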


The proof of Theorem \ref{DiscreteApp} will be obtained as a consequence of three lemmas, the first of which is a slightly altered version of \cite[Lemma 7.4]{DeSc94}:
\begin{lemma}\label{l18} 
Under the assumptions of Proposition \ref{DiscreteApp}, the sequence of random
variables $(QV^n)^\i_{n=1}$ is bounded in $\Lzero$, where
$$QV^n=\sum^{2^n}_{j=1} \big(S_{\frac{j}{2^n}} -S_{\frac{j-1}{2^n}}\big)^2.$$
\end{lemma}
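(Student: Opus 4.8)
The goal is to show that the quadratic variations $QV^n = \sum_{j=1}^{2^n} (S_{j/2^n} - S_{(j-1)/2^n})^2$ form a sequence bounded in $\Lzero$; that is, for every $\eta>0$ there is $K>0$ with $\sup_n \P(QV^n > K) < \eta$. I would argue by contradiction: if $(QV^n)$ is \emph{not} bounded in probability, then after passing to a subsequence there is some $\alpha>0$ and a subsequence (not relabelled) with $\P(QV^n > n) \geq \alpha$ for all $n$ (or more conveniently $\P(QV^n > c_n)\geq\alpha$ for suitable $c_n\to\infty$). The plan is to convert this ``large quadratic variation'' into a free lunch with vanishing risk and little investment, contradicting the hypothesis on $S$.

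The key construction is the following. Fix $n$ and consider, on the dyadic grid of mesh $2^{-n}$, the simple integrand given by $H^n_t = \gamma_n \sum_{j=1}^{2^n} (S_{(j-1)/2^n} - S_{(j'-1)/2^n}) \I_{\rrbracket (j-1)/2^n,\, j/2^n\rrbracket}(t)$ — i.e.\ essentially integrating $S_{-}$ (the left-continuous version sampled on the grid, after centering, with a stopping applied) against $S$, scaled by a small factor $\gamma_n\to 0$. The elementary algebraic identity
\begin{align*}
2\sum_{j=1}^{2^n} S_{(j-1)/2^n}\,(S_{j/2^n}-S_{(j-1)/2^n}) = S_1^2 - S_0^2 - QV^n
\end{align*}
shows that $(\ovl H^n\cdot S)_1 = \tfrac12(S_1^2 - QV^n)$ for the unscaled integrand $\ovl H^n = 2S_{-}^{(n)}$, where I write $S_-^{(n)}$ for the grid-sampled left value. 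Since $\|S\|_\infty\le 1$, the term $S_1^2$ is bounded by $1$, so on the event $\{QV^n > c_n\}$ the integral $(\ovl H^n\cdot S)_1$ is very negative, of order $-c_n/2$. That is a loss, not a gain, so I would instead use $-\ovl H^n$ (go short the running value), giving a \emph{large positive} terminal outcome on $\{QV^n>c_n\}$. The scaling $\gamma_n$ is then chosen so that $\gamma_n c_n \to \infty$ (possible since $c_n\to\infty$, e.g.\ $\gamma_n = c_n^{-1/2}$) while still $\gamma_n\to 0$; then $(\pm\gamma_n\ovl H^n\cdot S)_1 \to +\infty$ on a set of probability $\ge\alpha$, giving \eqref{FL}, and $\|\gamma_n\ovl H^n\|_\infty \le 2\gamma_n \to 0$ gives \eqref{SI}.

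The main obstacle is controlling the \emph{running minimum} of the gains process to get the vanishing-risk condition \eqref{VR}: a priori $(\gamma_n\ovl H^n\cdot S)_t$ could take large negative values at intermediate times $t<1$ even though its terminal value is large and positive. I would handle this by a stopping argument: define $\rho_n$ to be the first dyadic time at which the partial sum $\sum_{j\le k}(-\ovl H^n)_{(j-1)/2^n}(S_{j/2^n}-S_{(j-1)/2^n})$ drops below $-1$ (in the scaled version, below $-\delta$ for a fixed small $\delta$, or one lets the threshold shrink with $n$), and use the stopped integrand $-\gamma_n\ovl H^n\I_{\rrbracket 0,\rho_n\rrbracket}$. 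By construction the stopped gains never go below $-\delta$, so \eqref{VR} holds (with $\eps_n\to 0$ after a further diagonal refinement, or simply with a fixed small bound, which still contradicts NFLVR with little investment once combined with the scaling). The remaining point is to check that stopping does not destroy \eqref{FL}: one must verify that on a set of probability bounded below, $\rho_n = \infty$ (no early stop) and $QV^n$ is still large — this uses that $\|S\|_\infty\le 1$ forces $|{-\ovl H^n}_{(j-1)/2^n}|\le 2$, so before the quadratic variation has accumulated, the partial sums cannot have wandered far, making an early stop incompatible with the terminal value being hugely positive; a maximal-type / discrete Doob argument or a direct pathwise estimate on the increments, combined with passing to convex combinations à la Koml\'os if needed to stabilize the relevant events, closes this gap. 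This is essentially the content of \cite[Lemma 7.4]{DeSc94}, adapted to the ``little investment'' setting by inserting the scalars $\gamma_n$.
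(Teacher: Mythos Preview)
Your overall strategy---use the summation-by-parts identity to express $QV^n$ as a simple integral of $-S_-$ against $S$, then scale down to manufacture a free lunch with little investment---is exactly the paper's approach. But you have missed the one observation that makes the argument short, and in its place you introduce a stopping argument that you do not complete.

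The identity you write down at $t=1$ in fact holds for \emph{every} $t\in[0,1]$. With $h^n_t=-\sum_{j=1}^{2^n} S_{(j-1)/2^n}\I_{\rrbracket (j-1)/2^n,\,j/2^n\rrbracket}(t)$ one has
\[
(h^n\cdot S)_t=\tfrac12\sum_{j=1}^{2^n}\big(S_{\frac{j}{2^n}\wedge t}-S_{\frac{j-1}{2^n}\wedge t}\big)^2+\tfrac12\big(S_0^2-S_t^2\big)\ \ge\ -\tfrac12
\]
for all $t$, since the sum of squares is nonnegative and $|S|\le 1$. Thus the running gains are uniformly bounded below by $-\tfrac12$ \emph{for free}; the ``main obstacle'' you describe does not exist. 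After scaling by $\gamma_n\to 0$ (exactly as you propose) one gets \eqref{VR} and \eqref{SI} immediately, and \eqref{FL} from the identity at $t=1$. No stopping times, no maximal inequalities, no Koml\'os.

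Your workaround is not just unnecessary---it is also left unproved. The sentence ``before the quadratic variation has accumulated, the partial sums cannot have wandered far'' is precisely the content of the identity above, invoked as a heuristic rather than established. Had you attempted to verify that $\P[\rho_n=\infty]$ stays bounded away from zero, you would have computed the partial sums at dyadic times and found that they equal the running discrete quadratic variation minus $S_{k/2^n}^2$, hence never drop below $-1$; in other words $\rho_n\equiv\infty$ and the stopping is vacuous. So the gap in your argument is real, and the fix is simply to notice that the polarization identity is valid at all times, not only at the terminal time.
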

\begin{proof}  
Set $h^n_t=- \sum^{2^{n}}_{j=1} 
S_{\frac{j-1}{2^n}} \I_{ ] \frac{j-1}{2^n}, \frac{j}{2^{n}}] }(t) $. 
Then $\| h^n \|_\LIOO \leq 1$ since $\| S \|_\infty \leq 1.$
Moreover,
$$ (h^n \cdotp S)_{t}=\tfrac{1}{2} \sum_{j=1}^{2^n} \big(S_{\frac{j}{2^n}\wedge t}
-S_{\frac{j-1}{2^{n}}\wedge t}\big)^2 + \tfrac{1}{2}( S_{0}^2-S_{t}^2)\geq -\tfrac12. $$
For $t=1$ we find 
\begin{align}\label{almostQV}
(h^n \cdotp S)_{1}=\tfrac{1}{2} QV^n + \tfrac{1}{2}( S_{0}^2-S_{1}^2).\end{align}
Since $S$ satisfies no free lunch with vanishing risk and small investments 
the family $\{(h^n\cdot S)_1:n\geq 1\}$ is bounded in $\Lzero$, hence \eqref{almostQV} proves the lemma.
\end{proof}
For $c>0$ we define, for each $n\geq 1$,
\begin{align*}
\sigma_n(c)
=\inf \Big\{ \tfrac{k}{2^n} : \sum^k_{j=1} \big(S_{\frac{j}{2^n}}-S_{\frac{j-1}{2^n}}\big)^2 \geq c-4\Big\}. \end{align*}
The $\left\{\tfrac{1}{2^n},\ldots ,\tfrac{2^n-1}{2^n} ,1\right\} \cup \{+\i\}$-valued functions $\sigma_n(c)$ are stopping times with respect to the filtration $(\F_t)_{0\leq t \leq 1}.$
By the preceding lemma 
there is a constant  $c_1 >0$ such that, for all $n\geq 1$,
\begin{align}\label{COneAss} \p\left[\sigma_n \left(c_1\right) <\i\right] <\tfrac\eps2.\end{align}
\begin{lemma}
Under the assumptions of Proposition \ref{DiscreteApp} and assuming that $c_1$ satisfies \eqref{COneAss} 
the stopped martingales $M^{n,\sigma(c_1)}=(M^n_{\frac{j}{2^n}})_{j=0}^{2^n(\sigma_n 
(c_1) \wedge 1)}$ are bounded in $L^2(\Omega,\F,\p)$ by
$$\left\|M^{n,c_1}\right\|^2_{L^2 (\Omega,\F,\p)} \le c_1.$$
\end{lemma}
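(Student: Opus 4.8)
The plan is to bound the stopped martingale $M^{n,\sigma_n(c_1)}$ in $L^2$ by relating its terminal second moment to the quadratic variation of $S$ up to the stopping time $\sigma_n(c_1)$, which is controlled by construction of $\sigma_n(c_1)$. First I would recall the orthogonality of martingale increments: for the discrete martingale $M^n$ one has, for any stopping time $\rho$ taking values in the dyadic grid,
\begin{align*}
\E\big[(M^n_{\rho\wedge 1})^2\big]=\sum_{j=1}^{2^n}\E\Big[\big(M^n_{\frac j{2^n}\wedge\rho}-M^n_{\frac{j-1}{2^n}\wedge\rho}\big)^2\Big],
\end{align*}
since $M^n_0=S_0=0$ and cross terms vanish by the tower property. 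Thus it suffices to control the sum of squared martingale increments up to $\sigma_n(c_1)$.

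Next I would compare the martingale increments with the increments of $S$. From \eqref{W13} and \eqref{W13a} we have $M^n_{\frac j{2^n}}-M^n_{\frac{j-1}{2^n}}=(S_{\frac j{2^n}}-S_{\frac{j-1}{2^n}})-\E[S_{\frac j{2^n}}-S_{\frac{j-1}{2^n}}|\F_{\frac{j-1}{2^n}}]$, i.e.\ the martingale increment is the compensated version of the $S$-increment. Since conditional expectation is an $L^2$-contraction, $\E[(M^n_{\frac j{2^n}}-M^n_{\frac{j-1}{2^n}})^2|\F_{\frac{j-1}{2^n}}]\le \E[(S_{\frac j{2^n}}-S_{\frac{j-1}{2^n}})^2|\F_{\frac{j-1}{2^n}}]$ on the event $\{\tfrac j{2^n}\le\sigma_n(c_1)\}$, which is $\F_{\frac{j-1}{2^n}}$-measurable. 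Summing and taking expectations,
\begin{align*}
\E\big[(M^n_{\sigma_n(c_1)\wedge 1})^2\big]\le \E\Big[\sum_{j=1}^{2^n(\sigma_n(c_1)\wedge 1)}\big(S_{\frac j{2^n}}-S_{\frac{j-1}{2^n}}\big)^2\Big].
\end{align*}
Finally, by the definition of $\sigma_n(c_1)$ as the first dyadic time at which the accumulated squared increments of $S$ reach $c_1-4$, the partial sum just \emph{before} $\sigma_n(c_1)$ is at most $c_1-4$; adding the single increment at $\sigma_n(c_1)$, whose square is at most $(2\|S\|_\infty)^2\le 4$, gives that the whole sum on the right is at most $c_1$ pointwise, hence in expectation. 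This yields $\|M^{n,c_1}\|^2_{L^2}\le c_1$ as claimed.

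I expect the only genuinely delicate point to be the bookkeeping around the stopping time: one must be careful that the indicator $\I_{\{\frac j{2^n}\le\sigma_n(c_1)\}}$ is $\F_{\frac{j-1}{2^n}}$-measurable (true because $\sigma_n(c_1)$ is a stopping time for the discrete filtration and takes values on the grid), so that the conditioning argument and the insertion of this indicator into the sum of squared increments are both legitimate, and that the ``last jump'' contributes at most $4$ because $|S_{\frac j{2^n}}-S_{\frac{j-1}{2^n}}|\le 2\|S\|_\infty\le 2$. The assumption \eqref{COneAss} itself plays no role in \emph{this} lemma's inequality (it is only used later to make $\P[\sigma_n(c_1)<\infty]$ small); here we only need that $\sigma_n(c_1)$ is a stopping time and the $(c_1-4)$ threshold in its definition.
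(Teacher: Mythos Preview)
Your proof is correct and follows essentially the same route as the paper. The paper phrases the key inequality via the Pythagorean identity $\E[(\Delta S)^2]=\E[(\Delta M)^2]+\E[(\Delta A)^2]$ (the cross term vanishing since $\Delta A$ is $\F_{\frac{j-1}{2^n}}$-measurable and $\Delta M$ has zero conditional mean), whereas you phrase it as ``centering reduces the conditional second moment''; these are the same observation, and you are in fact more explicit than the paper about the last-jump bookkeeping that gives the pointwise bound $c_1$.
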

\begin{proof} Fix $n\in \N$.
For any $k \in \{0,1, \ldots , 2^n-1 \}$ 
\begin{align*}
 \E [(S^{\sigma_n(c_1)}_{\frac{k}{2^n}} -S^{\sigma_n(c_1)}_{\frac{k-1}{2^n}})^2]&=
\E [( M^{n,\sigma_n(c_1)}_{\frac{k}{2^n}} - M^{n,\sigma_n(c_1)}_{\frac{k-1}{2^n}}+ A^{n,\sigma_n(c_1)}_{\frac{k}{2^n}} -A^{n,\sigma_n(c_1)}_{\frac{k-1}{2^n}})^2] \\ 
& =\E [( M^{n,\sigma_n(c_1)}_{\frac{k}{2^n}} -M^{n,\sigma_n(c_1)}_{\frac{k-1}{2^n}})^2]+\E [(A^{n,\sigma_n(c_1)}_{\frac{k}{2^n}} - A^{n,\sigma_n(c_1)}_{\frac{k-1}{2^n}})^2] \\ &\geq 
\E [( M^{n,\sigma_n(c_1)}_{\frac{k}{2^n}} - M^{n,\sigma_n(c_1)}_{\frac{k-1}{2^n}})^2].
\end{align*}
Hence, we obtain
\begin{align*} 
\E\big[(M^{n,\sigma_n(c_1)}_{1})^2]&=\E[ (M^{n,\sigma_n(c_1)}_{1})^2]-\E[ (M^{n,\sigma_n(c_1)}_{0})^2\big]\\
&=\E \Big[ \sum_{k=1}^{2^n} \left( M^{n,\sigma_n(c_1)}_{\frac{k}{2^n}} - M^{n,\sigma_n(c_1)}_{\frac{k-1}{2^n}}\right)^2 \Big]\leq c_1.\qedhere
\end{align*}
\end{proof}
We write $A^{n,\sigma_n(c_1)}$ for the stopped process $(A^n_{\frac{j}{2^n}})^{2^n (\sigma_n(c_1)\wedge 1)}_{j=0}$ 
and  abbreviate
$$TV^{n}= TV(A^{n,\sigma_n(c_1)})  =\sum^{2^n (\sigma_n (c_1)\wedge 1)}_{j=1} \big| A^n_{\frac{j}{2^n}} - A^n_{\frac{j-1}{2^n}}\big|.$$
\begin{lemma}
Under the assumptions of Proposition \ref{DiscreteApp},
the sequence $(TV^{n})^\i_{n=1}$ is bounded in $\Lzero.$
\end{lemma}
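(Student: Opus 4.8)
The plan is to show that $(TV^n)_{n=1}^\infty$ is bounded in $\Lzero$ by exhibiting, for each $n$, a simple integrand $H^n$ whose gain closely approximates $TV^n$ from below, while $\|H^n\|_\infty$ is small; then \eqref{NFL} (no free lunch with vanishing risk and little investment) forces the family of gains to be bounded in $\Lzero$, whence so is $(TV^n)_{n=1}^\infty$. The natural candidate is to ``trade against the sign of the increments of $A^n$'': put
\begin{align*}
H^n_t = -\tfrac1n\sum_{j=1}^{2^n}\operatorname{sgn}\!\big(A^n_{\frac{j}{2^n}}-A^n_{\frac{j-1}{2^n}}\big)\,\I_{]\frac{j-1}{2^n},\frac{j}{2^n}\wedge\sigma_n(c_1)]}(t),
\end{align*}
which is a legitimate simple integrand since $A^n_{\frac{j}{2^n}}-A^n_{\frac{j-1}{2^n}}$ is $\F_{\frac{j-1}{2^n}}$-measurable, and which satisfies $\|H^n\|_\infty\le \tfrac1n\to0$, so \eqref{SI} holds automatically.

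The key computation is to relate $(H^n\cdot S)_1$ to $\tfrac1n TV^n$. Writing $S=M^n+A^n$ on the dyadic grid, one gets
\begin{align*}
(H^n\cdot S)_1 = \tfrac1n\,TV^n \;-\; \tfrac1n\sum_{j=1}^{2^n(\sigma_n(c_1)\wedge1)}\operatorname{sgn}\!\big(\Delta_j A^n\big)\,\Delta_j M^n,
\end{align*}
where $\Delta_j A^n=A^n_{\frac j{2^n}}-A^n_{\frac{j-1}{2^n}}$ and similarly for $M^n$. The second term is $-\tfrac1n(G^n\cdot M^{n,\sigma_n(c_1)})_1$ with $\|G^n\|_\infty\le1$, so by the $L^2$-isometry for martingales and the previous lemma its $\Ltwo$-norm is at most $\tfrac1n\|M^{n,\sigma_n(c_1)}_1\|_{\Ltwo}\le \tfrac{\sqrt{c_1}}{n}\to0$; in particular this correction term tends to $0$ in $\Lzero$, hence also $(H^n\cdot S)_1^- \to 0$ in probability (indeed in $\LIOO$, after a similar estimate for all intermediate times $t$, using that $TV$ restricted to a sub-interval is nonnegative so the partial sums of $\tfrac1n TV^n$ are nonnegative). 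Thus the sequence $H^n$ satisfies \eqref{VR} and \eqref{SI}, so by the NFLVR-with-little-investment hypothesis the family $\{(H^n\cdot S)_1 : n\ge1\}$ — equivalently $\{(H^n\cdot S)_1^+:n\ge1\}$ — must be bounded in $\Lzero$. Since $\tfrac1n TV^n = (H^n\cdot S)_1 + \tfrac1n(G^n\cdot M^{n,\sigma_n(c_1)})_1$ and both summands on the right are bounded in $\Lzero$, the sequence $(\tfrac1n TV^n)_{n=1}^\infty$ is bounded in $\Lzero$; multiplying by $n$ does not immediately help, so one instead argues directly: for any $\delta>0$ pick $K$ with $\sup_n\P[|(H^n\cdot S)_1|>K]<\delta$ and $\sup_n\P[\tfrac1n|(G^n\cdot M^{n})_1|>K]<\delta$, giving $\P[\tfrac1n TV^n>2K]<2\delta$, i.e. $\P[TV^n > 2Kn]<2\delta$ — which is \emph{not} a uniform bound.

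Hence the scaling $\tfrac1n$ is too crude, and the real plan must avoid dividing by $n$: instead one should scale by a deterministic sequence $\eta_n\downarrow0$ chosen slowly enough that $\eta_n \|M^{n,\sigma_n(c_1)}_1\|_{\Ltwo}\le \eta_n\sqrt{c_1}\to0$ (which holds for \emph{any} $\eta_n\to0$) but such that boundedness in $\Lzero$ of $\{\eta_n TV^n\}$ forces boundedness of $\{TV^n\}$ itself. Concretely, suppose for contradiction that $(TV^n)$ is \emph{not} bounded in $\Lzero$: then there is $\alpha>0$ and a subsequence along which $\P[TV^{n_k}>k]\ge\alpha$. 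Set $\eta_{n_k}=1/\sqrt k$ (and $\eta_n$ arbitrary otherwise), so $\|\eta_{n_k}H^{n_k}\|_\infty\le\eta_{n_k}\to0$; the martingale correction term has $\Ltwo$-norm $\le \eta_{n_k}\sqrt{c_1}\to0$, so $(\eta_{n_k}H^{n_k}\cdot S)^-\to0$ in $\LIOO$ along the subsequence; but $(\eta_{n_k}H^{n_k}\cdot S)_1 = \eta_{n_k}TV^{n_k} - (\text{term}\to0)$ and $\P[\eta_{n_k}TV^{n_k}>\sqrt k\cdot\tfrac1{\sqrt k}=1]=\P[TV^{n_k}>k]\ge\alpha$, so $(\eta_{n_k}H^{n_k}\cdot S)_1^+\not\to0$ in probability — contradicting NFLVR with little investment. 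This contradiction yields the claim. The main obstacle, as the above makes clear, is precisely the bookkeeping around the scaling factor: one must choose the scaling to simultaneously kill the martingale error in $\Ltwo$ and the uniform-risk term, keep $\|H^n\|_\infty\to0$, and yet retain enough size in the total-variation term to violate the no-arbitrage condition; the clean way is the subsequence-contradiction argument just sketched, and a secondary technical point is verifying the uniform-in-$t$ control $\|(\eta_{n}H^{n}\cdot S)^-\|_{\LIOO}\to0$, for which one uses that the partial total-variation sums are nonnegative together with Doob's $L^2$-inequality applied to the stopped martingale $M^{n,\sigma_n(c_1)}$.
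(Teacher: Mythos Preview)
Your overall strategy---trade against the sign of the predictable increments, split $(H^n\cdot S)$ into a nonnegative partial-variation part and a martingale correction controlled via the previous lemma, then argue by contradiction along a subsequence with a suitable scaling---is exactly the paper's approach. The gap is in your verification of \eqref{VR}. That condition demands
\[
\big\|(H^n\cdot S)^-\big\|_{\LIOO}=\sup_{0\le t\le 1}\big\|(H^n\cdot S)_t^-\big\|_{\LI}\to 0,
\]
i.e.\ a \emph{uniform almost-sure} bound, not merely convergence in probability. You correctly observe that $(H^n\cdot S)_t^-$ is dominated (up to a harmless $O(\eta_n)$ off-grid term coming from $|S|\le 1$) by $\eta_n\sup_j|(G^n\cdot M^{n,\sigma_n(c_1)})_{j/2^n}|$, and then invoke Doob's $L^2$-inequality. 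But Doob only gives $\E\big[\sup_j(G^n\cdot M^{n,\sigma_n(c_1)})_{j/2^n}^2\big]\le 4c_1$, which controls the running supremum in $L^2$, not in $L^\infty$. From this you cannot conclude $\|(\eta_n H^n\cdot S)^-\|_{\LIOO}\to 0$; the martingale correction can be arbitrarily large on a set of small probability, so \eqref{VR} as stated is simply not verified.

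The paper repairs exactly this point by an extra stopping time: after obtaining the Doob bound $\E[\sup_j(G^n\cdot M^{n,\sigma_n(c_1)})_{j/2^n}^2]\le 4c_1$, it sets
\[
\tau_n=\inf\big\{\tfrac{j}{2^n}:|(G^n\cdot M^{n,\sigma_n(c_1)})_{j/2^n}|\ge c_2\big\},
\]
with $c_2$ large enough that $\P[\tau_n<\infty]\le \alpha/2$. Replacing $H^n$ by its stopped version $H^{n,\tau_n}$ forces the martingale correction to be bounded by $c_2$ \emph{almost surely}, so now $(H^{n,\tau_n}\cdot S)_t\ge -(c_2+2)$ uniformly. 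The scaling argument (your $\eta_{n_k}=k^{-1/2}$, or anything going to zero slower than $1/k$) then legitimately yields \eqref{VR} and \eqref{SI}, while $\P[\tau_{n_k}<\infty]\le\alpha/2$ still leaves $\P[\eta_{n_k}TV^{n_k}\ge\sqrt k,\ \tau_{n_k}=\infty]\ge\alpha/2$, preserving the free lunch. Inserting this stopping step into your argument closes the gap.
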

\begin{proof} Suppose not. Then there is some $\alpha>0$ and for each $k$ some $n$ such that 
\begin{equation} \label{Eq1}
 \P[TV^{n} \geq  k] \geq \alpha. 
\end{equation}
For $n\geq 1$ let $b^n_{ {j-1}}=\mbox{sign}\big(A^{n,\sigma_n(c_1)}_{ \frac{j}{{2^{n}}}}-A^{n,\sigma_n(c_1)}_{ \frac{j-1}{{2^{n}}}}\big)$ and define
$$h^n(t)=\sum^{2^{n}}_{j=1}
b^n_{ {j-1}}
\I_{ ] \frac{j-1}{{2^{n}}}, \frac{j}{2^{n}}] }(t).$$ 
 Note that $\| h^n(t) \|_\infty \leq 1.$
Also, $(h^{n,\sigma_n(c_1)} \cdotp S)_{t}=(h^{n} \cdotp S^{\sigma_n(c_1)})_{t} $ can be estimated from below by
\begin{align}
\sum_{ j \leq t2^n} 
b^n_{ {j-1}}\Big[A^{n,\sigma_n(c_1)}_{ \frac{j}{2^n}}-& A^{n,\sigma_n(c_1)}_{ \frac{j-1}{2^n}}+M^{n,\sigma_n(c_1)}_{ \frac{j}{2^n}}-M^{n,\sigma_n(c_1)}_{ \frac{j-1}{2^n}}\Big] \nonumber  
+ b^n_{\lfloor t2^n\rfloor  }
\Big[S^{\sigma_n(c_1)}_{ t}-S^{\sigma_n(c_1)}_{ \frac{\lfloor t2^n\rfloor  }{2^n}}\Big] \nonumber 
\\  &\quad  \geq \  \big(h^{n,\sigma_n(c_1)} \cdotp A\big)_{ \frac{\lfloor t2^n\rfloor  }{2^n}}+ \big(h^{n,\sigma_n(c_1)} \cdotp M\big)_{ \frac{\lfloor t2^n\rfloor  }{2^n}}-2.\label{TheMartingalePart}
 \end{align}
As $\|(h^{n,\sigma_n(c_1)} \cdotp M)\|^2_\Ltwo\leq\|M^{n,\sigma_n(c_1)}\|^2_\Ltwo\leq c_1$ we obtain in particular that
\begin{align}
\big(h^{n,\sigma_n(c_1)} \cdotp S\big)_{1}=TV^n+\big(h^{n,\sigma_n(c_1)} \cdotp M^n\big)_{1} \nonumber 
\end{align}
does not remain bounded in $\Lzero$.

We would like to assure that \eqref{TheMartingalePart} is uniformly bounded from below, but since we don't have a proper control on the martingale part, we need to perform some further stopping.
By Doob's maximal inequality 
$$ \E \Big[\sup_{1\leq j\leq 2^n} \big((h^{n,\sigma_n(c_1)} \cdotp M)_j\big)^2\Big] \leq 4 c_1.$$
Hence for $c_2$ sufficiently large 
$$\tau_n=\inf\{\tfrac j{2^n}: |(h^{n,\sigma_n(c_1)} \cdotp M)_{\frac j{2^n}}|\geq c_2\}$$
satisfies $\P[\tau_n<\infty]= \tfrac\alpha 2$.
We thus obtain that $(h^{n,\tau_n\wedge\sigma_n(c_1)} \cdotp S)_{t}, n\geq 1$ is uniformly bounded from below, whereas 
$(h^{n,\tau_n\wedge\sigma_n(c_1)} \cdotp S)_{1}\geq 1$ is still unbounded in $\Lzero$, hence we obtain a free lunch with vanishing risk and small investments.
 \end{proof}
 \begin{proof}[Proof of Proposition \ref{DiscreteApp}.] We define $\tau_n(c) = \inf\{\tfrac{k}{2^n} :\sum^k_{j=1} |A^n_{\frac{j}{2^n}} - 
A^n_{\frac{j-1}{2^n}}|\geq c-2\},$ so that the stopped processes
$ A^{n,\tau_n (c)}, n\geq 1$ are bounded in total variation by $c$.
By the preceding lemma 
there is a constant  $c_2 >0$ such that, for all $n\geq 1$,
$$ \p\left[\tau_n \left(c_2\right) <\i\right] <\tfrac\eps2.$$
Finally set
$\varrho_n  =\sigma_n (c_1) \wedge \tau_n (c_2)$ and  $C= c_1 \vee c_2$. 
\end{proof}

In the next step we extend the decompositions obtained in Proposition \ref{DiscreteApp} to continuous time. 
In the course of the proof we will use the following technical but elementary lemma.

\begin{lemma}\label{VariationBoundTrick}
Let $f,g:[0,1]\to \RR$ be deterministic functions such that $f$ takes only finitely many values and is left-continuous, i.e.\ $f$ can be written in the form 
\begin{align}
f= \sum_{k=1}^N f(s_{k})\I_{]s_{k-1},s_k]}
\end{align}
for appropriate $0\leq s_0\leq \ldots\leq s_N\leq 1$.
 For $t\in [0,1]$ define (in analogy to  \eqref{W3})
\begin{align}
(f\cdot g)_t= \sum_{k= 1}^{n(t)}  f(s_k)\big(g(s_k)-g(s_{k-1})\big)+ f(s_{n(t)})\big(g(t)-g(s_{n(t)})\big), 
\end{align}
where $n(t)\leq N$ is the maximal number subject to the condition  $s_{n(t)}<t$.
  For any increasing finite sequence $0\leq t_0\leq \ldots \leq t_m\leq 1$ we then have
\begin{align}\notag
\sum_{i=1}^m|(f\cdot g)(t_i)-(f\cdot g)(t_{i-1})|
\ \leq \ 2\ TV(f) \cdot  \|g\|_\infty + \|f\|_\infty\cdot  \sum_{i=1}^m |g(t_i)-g(t_{i-1})|.
\end{align}
\end{lemma}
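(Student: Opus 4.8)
The plan is to prove the estimate by a direct telescoping argument, exploiting the structure of $(f\cdot g)$ as a ``discrete stochastic integral.'' First I would fix the increasing sequence $0\le t_0\le\cdots\le t_m\le 1$ and, to avoid bookkeeping ambiguities, refine the partition by inserting all the jump times $s_0,\dots,s_N$ of $f$. This is harmless: inserting extra points can only increase the left-hand sum $\sum_i|(f\cdot g)(t_i)-(f\cdot g)(t_{i-1})|$ (it is a variation sum, and variation sums increase under refinement), while the right-hand side is unchanged since it does not depend on the $t_i$-partition beyond $\sum_i|g(t_i)-g(t_{i-1})|$, which also only increases under refinement. So it suffices to prove the bound when $\{s_k\}\subseteq\{t_i\}$.

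Next, on the refined partition, for consecutive points $t_{i-1}<t_i$ lying in a single half-open interval $]s_{k-1},s_k]$ on which $f$ is constant equal to $f(s_k)$, we get the clean identity
\begin{align}\notag
(f\cdot g)(t_i)-(f\cdot g)(t_{i-1}) = f(s_k)\bigl(g(t_i)-g(t_{i-1})\bigr).
\end{align}
The only subtlety is what happens at a $t_i$ that coincides with some jump point $s_k$: there the ``current'' value of $f$ switches. I would handle this by writing, for each consecutive pair, $(f\cdot g)(t_i)-(f\cdot g)(t_{i-1}) = f(s_{k(i)})(g(t_i)-g(t_{i-1}))$ where $s_{k(i)}$ is the relevant constancy level, and then telescoping via a summation-by-parts (Abel) rearrangement: grouping the sum over all $i$ and reorganizing by the value of $f$, the terms where $f$ is locally constant contribute $\sum_i f(s_{k(i)})(g(t_i)-g(t_{i-1}))$, which after Abel summation splits into a ``boundary'' piece bounded by $\|f\|_\infty\sum_i|g(t_i)-g(t_{i-1})|$ and a ``jump of $f$'' piece bounded by $\sum_k|f(s_k)-f(s_{k-1})|\cdot\|g\|_\infty = TV(f)\cdot\|g\|_\infty$. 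Tracking the contributions at the endpoints $t_0$ and $t_m$ and at the last partial interval $]s_{n(t)},t]$ carefully produces the extra factor $2$ in front of $TV(f)\cdot\|g\|_\infty$.

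Concretely, the cleanest route I would actually write down is: set $\phi(t)=f(s_{n(t)})$ for the left-continuous step function tracking the active level of $f$ at time $t$ (with the conventions of the lemma), so that $(f\cdot g)$ is literally the Riemann–Stieltjes-type integral $\int_0^t \phi\,dg$ along the given partition. Then $(f\cdot g)(t_i)-(f\cdot g)(t_{i-1}) = \int_{t_{i-1}}^{t_i}\phi\,dg$, and using integration by parts on each sub-interval, $\int_{t_{i-1}}^{t_i}\phi\,dg = \phi(t_i)g(t_i)-\phi(t_{i-1})g(t_{i-1}) - \int_{t_{i-1}}^{t_i} g\,d\phi$. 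Summing $|\cdot|$ over $i$: the first part telescopes and is controlled, after one more Abel step, by $\|f\|_\infty\sum_i|g(t_i)-g(t_{i-1})| + (\text{jump terms of }\phi)\cdot\|g\|_\infty$; the second part is bounded by $\|g\|_\infty\cdot TV(\phi)\le \|g\|_\infty\cdot TV(f)$. Collecting, the $\|g\|_\infty\cdot TV(f)$ contributions appear twice, giving exactly the claimed inequality.

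The main obstacle is purely bookkeeping: making the reduction to a common refinement watertight, and keeping precise track of which value of $f$ is ``active'' across partition points that coincide with the jump points $s_k$ — in particular justifying that the refinement step does not decrease the left-hand side. Once the partition contains all the $s_k$, the estimate is an entirely mechanical summation-by-parts, and no genuine analytic difficulty remains.
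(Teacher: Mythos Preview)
Your ``cleanest route'' in the third paragraph --- Abel summation / integration by parts carried out \emph{separately on each original interval} $[t_{i-1},t_i]$, before taking absolute values --- is exactly the paper's argument, and it is correct. The paper inserts the jump points of $f$ into each $[t_{i-1},t_i]$ to obtain subdivision points $t_{i,0}=t_{i-1}\le\cdots\le t_{i,n}=t_i$, writes
\[
(f\cdot g)(t_i)-(f\cdot g)(t_{i-1})=\sum_{j} f(t_{i,j-1})\bigl(g(t_{i,j})-g(t_{i,j-1})\bigr),
\]
Abel-sums this into a ``jump of $f$'' part bounded by $2\|g\|_\infty\sum_j|\Delta f|$ plus the single boundary term $f(t_{i,0})\bigl(g(t_i)-g(t_{i-1})\bigr)$, and only then takes absolute values and sums over $i$. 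The crucial feature is that the $g$-increments telescope back to the \emph{original} $g(t_i)-g(t_{i-1})$.

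However, the reduction you propose in your first paragraph is not valid as stated. You argue that inserting the $s_k$ into the partition $\{t_i\}$ is ``harmless'' because both sides only increase under refinement. But that is the wrong direction: if $L\le L'$ and $R\le R'$, then proving $L'\le R'$ gives $L\le R'$, not $L\le R$. Concretely, after a global refinement each increment becomes $f(s_{k(i)})\bigl(g(t'_i)-g(t'_{i-1})\bigr)$, and the naive bound produces $\|f\|_\infty\sum_i|g(t'_i)-g(t'_{i-1})|$ over the \emph{refined} partition, which may strictly exceed $\|f\|_\infty\sum_i|g(t_i)-g(t_{i-1})|$. To recover the original $g$-variation you must keep the outer sum of absolute values over the original $t_i$'s and perform the Abel rearrangement inside each $[t_{i-1},t_i]$ --- which is precisely your third-paragraph route and the paper's proof, not a consequence of the global refinement. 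So drop the ``it suffices to prove the bound when $\{s_k\}\subseteq\{t_i\}$'' step and go straight to the per-interval summation by parts.
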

\begin{proof}
Define for $i\in\{0,\ldots, m-1\} $ numbers $t_{i,0}\leq t_{i,1} \leq \ldots \leq t_{i,n} $ satifying $t_i=t_{i,0} $ and $t_{i+1}= t_{i,n}$ and so that all jumps of $f$ on the interval $[t_0,t_m]$ occur at some $t_{i,j}$. Then we obtain
\begin{align*}
&\sum_{i=1}^m|(f\cdot g)(t_i)-(f\cdot g)(t_{i-1})|
= \sum_{i=1}^m\Big| \sum_{j=1}^n f(t_{i,j-1}) \big( g(t_{i,j})- g(t_{i,j-1})\big)\Big|\\
=&  \sum_{i=1}^m\Big|\Big(\sum_{j=1}^n \big(f(t_{i,j}) - f(t_{i,j-1}) \big) \big( g(t_{i,n})- g(t_{i,j-1})\big)\Big) +f(t_{i,0})\big( g(t_{i,n})- g(t_{i,0})\big) \Big|\\
\leq &\ 2\  TV(f) \cdot  \|g\|_\infty + \|f\|_\infty\cdot  \sum_{i=1}^m |g(t_i)-g(t_{i-1})|.\qedhere
\end{align*}
\end{proof}

\begin{proposition}\label{ContinuousApp}
Let $S=(S_t)_{\1}$ be a c\`{a}dl\`{a}g, adapted process satisfying $S_0=0$, $\| S\|_\LIOO\leq 1$ and the condition of    {\sffamily no free lunch with vanishing risk and little investment} (Definition \ref{NFLVRLI}).

For $\eps>0$ there exist a constant $C>0$, a $[0,1]\cup\{\infty\}$-valued stopping time $\alpha$ such that $\P[\alpha<\infty]<\eps$ and a sequence 
of continuous time c\`adl\`ag, adapted processes $\mathcal A^n, \mathcal M^n$ such that $\mathcal A(0)=\mathcal M(0)=0$, $(\mathcal M^n)$ is a martingale and
\begin{align}
 \mathcal A^{n,\alpha} +\mathcal M^{n,\alpha}&= S^{\alpha},\label{DecompositionPart}\\
 \|\mathcal M^{n,\alpha}\|_\Ltwo^2&\leq C, \label{MUBounded}  \\
 \sum_{j=1}^{2^n}|\mathcal  A^{n,\alpha}_{\frac j{2^n}}-\mathcal  A^{n,\alpha}_{\frac {j-1}{2^n}}|&\leq C.\label{AUBounded}
\end{align}
\end{proposition}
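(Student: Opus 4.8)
The plan is to upgrade the \emph{discrete-time} decomposition of Proposition \ref{DiscreteApp} to a \emph{continuous-time} decomposition by a Koml\'os-type argument: first pass to convex combinations of the stopped discrete processes so that they converge, then identify the limit and show it provides the desired continuous-time martingale $\mathcal M^n$ and finite-variation process $\mathcal A^n$. The key point is that the uniform bounds in Proposition \ref{DiscreteApp} --- namely $TV(A^{n,\varrho_n})\le C$ and $\|M^{n,\varrho_n}_1\|_{\Ltwo}^2\le C$ --- are exactly what is needed to extract limits: an $\Ltwo$-bound permits a Koml\'os/Banach--Saks argument on the martingale part, while a uniform total-variation bound allows one to pass to the limit on the finite-variation part (viewing the $A^n$ as measures on $[0,1]$ of uniformly bounded mass).

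First I would take $\alpha$ to be a single stopping time dominating all the $\varrho_n$'s in the sense needed. More precisely, since $\P[\varrho_n<\infty]<\eps$ for every $n$, one can choose (after possibly discarding finitely many indices, or by a further diagonal stopping) a $[0,1]\cup\{\infty\}$-valued stopping time $\alpha$ with $\P[\alpha<\infty]<\eps$ such that on $\{\alpha=\infty\}$ the bounds from Proposition \ref{DiscreteApp} hold simultaneously; replacing $S$ by $S^\alpha$ we may assume the discrete decompositions $S^n = M^n + A^n$ already satisfy $TV(A^n)\le C$ and $\|M^n_1\|_{\Ltwo}^2\le C$ globally. Now for each dyadic level $n$, extend $A^n$ and $M^n$ from the grid $\{j/2^n\}$ to all of $[0,1]$ by making them constant on the half-open intervals $]\tfrac{j-1}{2^n},\tfrac j{2^n}]$ (or, for $A^n$, one could interpolate; the piecewise-constant \emph{c\`adl\`ag} choice is the natural one and matches the stated conclusion since \eqref{AUBounded} only evaluates $\mathcal A^{n,\alpha}$ at dyadic points). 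Then $\mathcal M^n$ is a martingale in continuous time with $\mathcal M^n_0=0$, $\mathcal A^n_0=0$, and $\mathcal A^{n,\alpha}+\mathcal M^{n,\alpha}=S^\alpha$ with the bounds \eqref{MUBounded} and \eqref{AUBounded} inherited directly from Proposition \ref{DiscreteApp}, with $C$ unchanged.

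Reading the statement carefully, Proposition \ref{ContinuousApp} does \emph{not} yet claim that the sequences $(\mathcal M^n)$, $(\mathcal A^n)$ converge --- it only asserts existence of the decomposition together with the uniform bounds, now phrased genuinely in continuous time. So the real content is: (i) the reduction to a single good event $\{\alpha=\infty\}$ via the $\eps/2$-type argument already used repeatedly in \S3; and (ii) the bookkeeping of re-indexing the stopped discrete processes as honest c\`adl\`ag adapted processes on $[0,1]$ and checking the martingale property of $\mathcal M^n$ with respect to the continuous filtration $(\F_t)$ (which is immediate, since $\mathcal M^n$ is piecewise constant and its jump times lie on the grid, where the discrete martingale property holds, and $(\F_t)$ restricted to the grid is the discrete filtration). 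I would also note that Lemma \ref{VariationBoundTrick} is stated here for later use (in the subsequent Koml\'os/limit step) rather than being needed for this Proposition itself; but if one wanted the bound \eqref{AUBounded} for a finer evaluation grid, that lemma is what controls the total variation of an integral process in terms of $TV(f)\|g\|_\infty$ and the variation of $g$.

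The main obstacle --- such as it is --- is the careful construction of the single stopping time $\alpha$: one cannot simply set $\alpha=\inf_n\varrho_n$ since that could have probability close to $1$. The right move is to fix a summable sequence $\eps_n$ with $\sum_n\eps_n<\eps$, re-run the construction of Proposition \ref{DiscreteApp} with $\eps_n$ in place of $\eps$ at level $n$ to get $\varrho_n$ with $\P[\varrho_n<\infty]<\eps_n$, and then put $\alpha=\inf_n\varrho_n$, so that $\P[\alpha<\infty]\le\sum_n\eps_n<\eps$ while on $\{\alpha=\infty\}$ every bound holds at once; the constant $C=C(\eps)$ from Proposition \ref{DiscreteApp} must then be taken uniform over the levels, which is fine since $c_1$ and $c_2$ there depend only on the $\Lzero$-bounds of $QV^n$ and $TV^n$, which are uniform in $n$. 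Everything else is routine verification, so I would keep the write-up short and lean on Proposition \ref{DiscreteApp} and the standard Doob maximal inequality already invoked above.
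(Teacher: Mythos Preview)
Your approach has a genuine gap in the construction of the single stopping time $\alpha$. You propose to re-run Proposition \ref{DiscreteApp} at level $n$ with tolerance $\eps_n$, $\sum_n\eps_n<\eps$, and claim that the resulting constant $C$ can be taken uniform in $n$ because ``$c_1$ and $c_2$ depend only on the $L^0$-bounds of $QV^n$ and $TV^n$, which are uniform in $n$''. But $c_1$ and $c_2$ also depend on the tolerance: $c_1$ is chosen so that $\P[QV^n\ge c_1-4]<\eps/2$, and $L^0$-boundedness of $(QV^n)$ only guarantees such a $c_1$ for each \emph{fixed} $\eps$. Once you demand $\P[\varrho_n<\infty]<\eps_n$ with $\eps_n\to 0$, the constants $c_1(n),c_2(n)$ in general blow up, and you lose the uniform bounds \eqref{MUBounded}, \eqref{AUBounded}. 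This is precisely the difficulty the Proposition is meant to overcome, and the naive Borel--Cantelli manoeuvre does not resolve it.

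There is a second, more minor, problem: the piecewise-constant extension of $M^n$ is \emph{not} a martingale with respect to $(\F_t)_{0\le t\le 1}$ (for $s\in[\tfrac{j}{2^n},\tfrac{j+1}{2^n})$ one has $\F_s\supsetneq\F_{j/2^n}$ in general, so $\E[M^n_{k/2^n}\,|\,\F_s]\ne M^n_{j/2^n}$), and with piecewise-constant $\mathcal M^n,\mathcal A^n$ the identity $\mathcal M^n+\mathcal A^n=S$ fails off the grid. The paper instead extends $M^n$ by $M^n_t=\E[M^n_1\,|\,\F_t]$ and sets $A^n_t=S_t-M^n_t$, which repairs both points but at the cost of losing control on $TV(A^n)$ off the grid.

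The paper's actual route is quite different from what you sketch. It encodes each stopping time via $R^n=\I_{\llbracket 0,\varrho_n\wedge 1\rrbracket}$, applies Koml\'os to the bounded sequence $(R^n_1)$ to get convex combinations $\R^n=\sum_i\mu^n_iR^i$ with $\R^n_1\to\R_1$ a.s., and then sets $\alpha_n=\inf\{t:\R^n_t<\tfrac12\}$. The point is that the \emph{inverse} integrand $\S^n=(\R^n)^{-1}\I_{\llbracket 0,\alpha_n\rrbracket}$ satisfies $\|\S^n\|_\infty\le 2$ and $\S^n\cdot(\R^n\cdot S)=S^{\alpha_n}$, so one recovers a decomposition $S^{\alpha_n}=\mathcal M^n+\mathcal A^n$ where $\mathcal M^n$ is an honest $L^2$-martingale and $\mathcal A^n$ has controlled grid variation; Lemma \ref{VariationBoundTrick} is used \emph{here} (not only later) to bound the grid variation of $\mathcal A^n=\S^n\cdot(\sum_i\mu_i R^i\cdot A^i)$. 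Finally, because the $\R^n_1$ converge, one can pass to a fast subsequence $(n_k)$ so that $\alpha=\inf_k\alpha_{n_k}$ still satisfies $\P[\alpha<\infty]\le 4\eps$. This Koml\'os-then-invert trick is the missing idea.
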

\begin{proof} Fix $\eps>0$ and let $C,M^n, A^n,\rho_n$ be as in Proposition \ref{DiscreteApp}. As a first step we extend the discrete time martingales
to martingales in continuous time (which, by slight abuse of notation, we still denote by $(M^n_t)_{\1}$) by letting 
$$M^n_t =\E[M^n_1 | \F_t], \qquad \1.$$
We also extend the discrete processes $(A^n_{\frac{j}{2^n}})^{2^n}_{j=0}$ to processes $(A^n_t)_{\1}$ by letting
$$ A^n_t= S_t -M^n_t, \qquad \1.$$
We note that these extended processes $(A^n_t)_{\1}$ need neither be predictable nor do we have a control on their total variation.
But we do have a control on the total variation of the restriction of the stopped process $(A^{n,\rho_n}_t)_{\1}$ to the grid $\{0, \tfrac{1}{2^n},\ldots , 
\tfrac{2^n -1}{2^n}, 1\}$, 
i.e.\ 
\begin{align}
\label{FirstConVarBound}
\sum^{2^n (\varrho_n \wedge 1)}_{j=1} \left| A^n_{\frac{j}{2^n}} - A^n_{\frac{j-1}{2^n}}\right|&\leq C.
\end{align}
We also note for further use that for $j\in \{0, \tfrac{1}{2^n},\ldots , 
\tfrac{2^n -1}{2^n}, 1\}$ and $t\in ]\tfrac{j-1}{2^n},\tfrac{j}{2^n}]$
\begin{align}\label{IntermediateBound}
\|A^n_t-A^n_{\frac{j}{2^n}}\|_\LI \leq 2
\end{align}
which readily follows from the representation
\begin{align}
\notag{}A^{n}_t&= S_t -M^{n}_t =S_t-\E[M^n_{\frac{j}{2^n}}|\F_t]\\
\notag{}&=S_t-\E[S_{\frac{j}{2^n}}-A^n_{\frac{j}{2^n}}|\F_t]=A^n_{\frac{j}{2^n}}- \big(S_t-\E[S_{\frac{j}{2^n}}|\F_t]\big).
\end{align}
Combining \eqref{FirstConVarBound} and \eqref{IntermediateBound} (and using that $A^n$ is c\`adl\`ag) we find that
\begin{align}\label{AUniformBound}
\| A^{n,\rho_n}\|_\LIOO \leq C+2.
\end{align}
The most delicate issue in the present proof is to pass --- in some sense --- to a limit of the stopping times $\rho_n$ in order to find the desired stopping time $\alpha.$
To this end, we define the left continuous process $R^n=\I_{
\llbracket
0,\varrho_n \wedge 1
\rrbracket
}$ as
\begin{align*}
R^n_t=\left\{
\begin{array}{cl}
1,&\mbox{ for }  0\le t\le\varrho_n ,\\
0,&\mbox{ for }  \varrho_n < t \le 1,\\
\end{array}\right.
\end{align*}
which is a decreasing, simple predictable integrand starting at $R^n_0 =1$ and satisfying
$\E\left[R^n_1\right] \geq 1-\ve.$
Also note that $A^{n,\rho_n}=(R^n\cdot A^n)$ and $M^{n,\rho_n}=(R^n\cdot M^n)$.

\medskip

We now apply Koml\'os' Lemma \ref{LtwoKomlos} to the sequence  $(R^n_1)_{n=1}^\infty$ of random variables in $\Linfty$ to pick, for each $n\geq 1$, convex combinations   
$\R_1^n =\sum^{N_n}_{i=n} \mu_i^n R_1^i$ and a random variable $\R_1\in \Linfty$ such that 
$\lim_{n\to \infty}\R^n_1=\R_1,$ convergence taking place almost surely.
Note that by dominated convergence
$\E \left[\R_1\right] \geq 1-\ve.$ 
Subsequently we also consider the convex combinations 
$\R^n =\sum^{N_n}_{i=n} \mu_i^n R^i$ 
of the processes $(R^n)_{n=1}^\infty$, and note that convergences of $\R^n_t$ is of course only granted if $t=1.$

In order to analyze the sequence $(\R^n\cdot S)^\i_{n=1}$ of simple integrals 
 we write 
\begin{align}\nonumber
\R^n\cdot S &= \Big(\sum^{N_n}_{i=n} \mu_i^{n} R^i\Big) \cdot S  =\sum^{N_n}_{i=n} \mu^{n}_i \Big( R^i\cdot (M^i +A^i)\Big) \\
& =\sum^{N_n}_{i=n} \mu_i^n (R^i \cdot M^i) +\sum^{N_n}_{i=n} \mu_i^n (R^i \cdot A^i). \label{W25}
\end{align}
Note that for each $n\in \N$, the first term is a martingale bounded in $\|.\|_2$ by $C^{\frac12}$, while the total variation of the  second term on the grid $\{0,\tfrac{1}{2^n},\ldots,1\}$ is bounded by $C$.

Define the stopping time $\alpha_n$ by 
$$ \alpha_n = \inf\left\{t: \R^n_t < \tfrac12\right\}.$$
As $\E\left[\R^n_1\right] \geq 1-\ve,$ we deduce from the inequality
$\ve \geq\E \left[1-\R^n_1\right] \geq \tfrac{1}{2} \p \left[\alpha_n <\i\right]$
that
$\p \left[\alpha_n <\i\right] \le 2\ve.$
Define $\S^n$ by
$$\S^n =(\R^n)^{-1} \I_{\llbracket 0, \alpha_n \rrbracket},$$
so that $\S^n$ is a simple predictable integrand satisfying $\| \S^n \|_\LIOO \le 2.$
Note that $\S^n \cdot (\R^n\cdot S) = (\S^n \R^n)\cdot S=\I_{\llbracket 0,\alpha_n \rrbracket }\cdot S$ equals the stopped process $S^{\alpha_n}$ defined by
\begin{align*}
S^{\alpha_n}_t=\left\{
\begin{array}{cl}
S_t,&\mbox{ for }  0\le t\le \alpha_n \wedge 1,\\
S_{\alpha_n},&\mbox{ for }  \alpha_n\le t\le 1.\\
\end{array}\right. 
\end{align*} 
We therefore end up with the following extension of \eqref{W25}
\begin{align}\label{W27}
S^{\alpha_n} =\S^n \cdot (\R^n \cdot S) &=\underbrace{\S^n\cdot \Big( \sum^{N_n}_{i=n} \mu_i (R^i\cdot M^i)\Big)}_{=:\mathcal M^n} +\underbrace{\S^n\cdot \Big( \sum^
{N_n}_{i=n}  \mu_i (R^i\cdot A^i)\Big)}_{=:\mathcal A^n} 
\end{align}
Next we establish that  $\mathcal{M}^n$ and $\mathcal{A}^n$  are bounded  as required. 
As $\| \S^n \|_\LIOO \le 2$, 
\begin{equation}\label{W27a}
\| \M^n\|_{L^2 (\p)} \le 2 C^{\frac12}.
\end{equation} 
Applying Lemma \ref{VariationBoundTrick} 
to the functions
$\S^n_t $ (which a.s.\ satisfy $TV(\S^n_t )\leq 3, \|\S^n_t\|_\infty\leq 2$) and $\sum^
{N_n}_{i=n}  \mu_i (R^i\cdot A^i)_t$ (whose total variation
on  $\{0,\tfrac{1}{2^n},\ldots,1\}$ is bounded by $C$
%
and which are
are 
uniformly bounded by \eqref{AUniformBound}) we obtain 
\begin{equation}\label{W28}
\sum^{2^n}_{j=1} \left| \mathcal{A}^n_{\frac{j}{2^n}} - \mathcal{A}^n_{\frac{j-1}{2^n}} \right|\le  
6\cdot (C+2)+2 \cdot C.
\end{equation}
We thus have established the boundedness  results \eqref{MUBounded} and \eqref{AUBounded} claimed in Proposition \ref{ContinuousApp}, except for the fact that the stopping times $\alpha_n$ still depend on $n$. 

\medskip

 We claim that there exists 
 an increasing sequence 
$(n_k)^\i_{k=1}$ such that the stopping time
$\alpha =\inf_{k\geq 1} \alpha_{n_k}$
satisfies
\begin{equation}\label{W32}
\p \left[\alpha <\i\right] \le 4\ve.
\end{equation}
Combining $\E [\R_1]\geq 1-\eps$ with the inequality $(1-a) \p \left[\R_1 \le a \right] \le  \ve$, we have 
$\p \left[\R_1 \le \tfrac23 \right] \le 3 \ve$
and we know that the sequence of random variables $(\R^n_1)^\i_{n=1}$ converges a.s.\  to $\R_1.$ 
Hence there is an increasing sequence $(n_k)^\i_{k=1}$ such that for all $k \geq 1$ 
$$\P(|\R_1^{n_k}-\R_1|\geq \tfrac1{15})\leq \eps 2^{-k}.$$
It follows that 
$\p \left[\inf_{k\geq 1} \R^{n_k}_1  \le\tfrac35 \right]
 \le 4\ve,$
which implies \eqref{W32}.

\medskip

Summing up we obtain that the sequences $(\mathcal M^{n_k})^\infty_{k=1}$ and $(\mathcal A^{n_k})^\infty_{k=1}$ satisfy Proposition \ref{ContinuousApp}.
\end{proof}

\section{Proof of the main Theorems}

The major work has been done in Proposition \ref{ContinuousApp}; it is now sufficient to ``pass to a limit'' to establish Theorem \ref{BMT}.

\begin{proof}[Proof of Theorem \ref{BMT}.]

By stopping $S$, if necessary,  we may assume that $|S| $ is uniformly bounded by $1$. We fix $\eps>0$ 
and pick $C$,  $\alpha $ and, for each $n\geq 1$, $\M^{n}$ and $\mathcal{A}^{n},$ according to Proposition \ref{ContinuousApp}.
Denote by $\D$ the dyadic numbers in the interval $[0,1]$. We now apply Koml\'os Lemma (cf.\ the discussion in the Appendix) to the sequence of $\Ltwo$-martingales $(\M^{n,\alpha})_{n=1}^\infty$ and, for each $t\in \D$, to the sequence of bounded random variables $(\mathcal{A}_t^{n,\alpha})_{n=1}^\infty $ to find a c\`{a}dl\`{a}g martingale $\M$, a process $(\mathcal A_t)_{t\in \D}$ and for each $k$ some convex weights $\lambda_n^{n}, \ldots, \lambda_{N_n}^{n}$  such that 
\begin{align}
&\label{ww23} \lambda_n^{n} \M_1^{n, \alpha} + \ldots+\lambda_{N_n}^{n}  \M_1^{{N_n}, \alpha}\to \M\ &\mbox{ and}\\
&\label{ww24} \lambda_n^{n} \mathcal{A}_t^{n, \alpha} + \ldots+\lambda_{N_n}^{n}  \mathcal{A}_t^{{N_n}, \alpha}\to \mathcal A_t\ &\mbox{ for each $t\in \D$,} 
\end{align}
where the convergence in \eqref{ww23} and \eqref{ww24} is a.s.\ as well as in  $\Ltwo$.

For the process
$(\ma_t)_{t\in \mathcal{D}},$ indexed by the dyadic numbers $\mathcal{D}\subseteq [0,1],$ we then have 
\begin{equation}\label{W35}
\sum^N_{j=1} | \ma_{t_j} - \ma_{t_{j-1}} | \le C, \qquad \mbox{a.s.}
\end{equation}
for any collection $t_0 \le t_1\le \ldots \le t_N$ in $\mathcal{D}$.
Also, for every $t\in\mathcal{D}$ we have
$$S^\alpha_t = \M_t +\ma_t,
$$
so that $(\ma_t)_{t\in\mathcal{D}}$ is c\`adl\`ag on $\mathcal{D}$. Using \eqref{W35} we conclude that we may extend $(\ma_t)_{t\in\mathcal{D}}$
by right continuity to a process $(\ma_t)_{\1}$ via
$$\ma_t=\lim_{s\downarrow t,s\in\mathcal{D}} \ma_s, \qquad \1,$$
where a.s.\ the above limit exists for all $t\in[0,1].$ Using again right continuity we conclude that
$$S^\alpha_t = \M_t +\ma_t, \qquad \mbox{for} \ \1,$$
hence we obtain desired decomposition on $\llbracket 0,\alpha\wedge 1\rrbracket$. As $\P(\alpha<\infty)<\eps$ and $\eps>0$ was chosen arbitrarily, it follows that $S$ is a semi-martingale.
%
%
\end{proof}

The unbounded case can be reduced from Theorem \ref{BMT} rather directly:
\begin{proof}[Proof of Theorem \ref{UBMT}.] 
It is sufficient  to establish that $(1)  \Rightarrow  (2).$
 We collect the big jumps of $S$ in the process 
$$J_t=\sum_{0< s\leq t} \Delta S_s\I_{\{|\Delta{S_s}|\geq 1\}},$$
where $\Delta S_t=S_t-S_{t-}$. As $S_t$ is c\`{a}dl\`{a}g, $J$ is of finite total variation. It remains to prove that the locally bounded, c\`adl\`ag  process $X=S-J$ is a semi-martingale. We want to apply Theorem \ref{BMT}.  Let $(H^n)$ be a sequence of simple integrands satisfying 
$\lim_{n}\big\| (H^n\cdot X)^{-} \big\|_\LIOT=\lim_{n} \| H^n \|_\LIOT = 0$. 
We claim that $(H^n)$ satisfies
\begin{align}\label{TheFirstOneHolds}
\lim_n \sup_{0\leq t\leq T} \left( (H^n\cdot S)_t\right)^{-}=0\quad \mbox{in probability}.
\end{align}
Indeed
\begin{align}
\sup_{0\leq t\leq T} \left( (H^n\cdot S)_t\right)^{-} 
& \leq 
\sup_{0\leq t\leq T} \left( (H^n\cdot X)_t\right)^{-} +\sup_{0\leq t\leq T} \left| (H^n\cdot J)_t\right|\\
&\leq \sup_{0\leq t\leq T} \left( (H^n\cdot X)_t\right)^{-} + (|H^n| \cdot TV(J))_T. \label{thfsd}
\end{align}
In \eqref{thfsd} the first term tends to $0$ in probability since it tends to $0$ in $\|.\|_\LI$ and the second term tends to $0$ since $J$ is a finite variation process so that $TV(J)_T=\sum_{0< s\leq t} |\Delta S_s|\I_{\{|\Delta{S_s}|\geq 1\}}< \infty$ almost surely. 

Having \eqref{TheFirstOneHolds} established, assumption (1) of Theorem \ref{UBMT} implies that $(H^n\cdot S)_T\to 0$ in probability. Since we have $(H^n\cdot J)_T\to 0$ this yields that also $(H^n\cdot X)_T=(H^n\cdot S)_T- (H^n\cdot J)_T$ converges to $0$ in probability. 

Thus $X$ satisfies {\sffamily no free lunch with vanishing risk and little investment} and hence is a semi-martingale by Theorem \ref{BMT}. 
\end{proof}

\section*{Appendix: Koml\'os' Lemma}

Koml\'os' orginal result reads as follows.
\begin{lemma}\cite{Koml67}
\label{komlos} Let $(f_{n})_{n\geq1}$ be a sequence of measurable
 functions on a probability space $(\Omega, \F,\P)$ such that $\sup_{n\geq 1} \| f_n\|_1<\infty$.
 Then there exists a subsequence $(\tilde f_n)_{n\geq 1}$ such that the functions $
 \tfrac1n(\tilde f_1+\ldots+\tilde f_n)$ 
 converge almost surely.
\end{lemma}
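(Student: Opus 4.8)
The plan is to follow the classical Koml\'os argument by a diagonal/exhaustion scheme, combined with a subsequence trick to promote convergence of Ces\`aro averages to convergence of arbitrary convex combinations along a subsequence. First I would recall the standard reduction: by splitting each $f_n$ into positive and negative parts it suffices to treat the case $f_n\geq 0$, and by the $L^1$-boundedness hypothesis one may pass to a subsequence along which $\E[f_n\wedge k]$ converges for every $k\in\N$ (a diagonal argument over the countably many truncation levels), so that the limits of the truncated expectations define a finite number; this controls the ``escape of mass'' to $+\infty$.

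Next I would carry out the heart of Koml\'os' construction. Working with the truncated variables $f_n^{(k)}=f_n\wedge k$, one builds inductively a subsequence $(\tilde f_n)$ together with an auxiliary sequence of averages so that the $L^2$-distances between successive partial Ces\`aro averages of the truncated sequences are summable; this uses the Hilbert-space geometry of $L^2$ (the truncated functions are bounded, hence in $L^2$) together with the fact that on a space with finite measure an $L^1$-bounded, uniformly integrable family has weakly convergent subsequences, whose limit is the a.s.\ limit of the Ces\`aro averages by a martingale/orthogonality estimate. Summability of the increments then yields a.s.\ convergence of $\tfrac1n(\tilde f_1^{(k)}+\dots+\tilde f_n^{(k)})$ for each fixed $k$, and a Borel--Cantelli argument using $\P[f_n\neq f_n^{(k)}]\to 0$ (which follows from $L^1$-boundedness via Markov's inequality, after a further thinning) transfers this to a.s.\ convergence of $\tfrac1n(\tilde f_1+\dots+\tilde f_n)$ itself. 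One extra observation makes the statement usable in the form invoked in the body of the paper: once the Ces\`aro averages of \emph{every} further subsequence of $(\tilde f_n)$ converge a.s.\ (Koml\'os' theorem applies to subsequences too), a standard lemma shows that arbitrary convex combinations $\sum_{i=n}^{N_n}\mu_i^n \tilde f_i$ can be arranged to converge a.s.

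I expect the main obstacle to be the inductive construction controlling the $L^2$-increments of the partial averages while simultaneously keeping track of all truncation levels $k$ — i.e., making the diagonalization and the quantitative $L^2$-estimate cohere. The uniform integrability bookkeeping (ensuring the truncation error is negligible in the right sense, uniformly enough to apply Borel--Cantelli after thinning) is the other delicate point. Since in this paper Koml\'os' Lemma is used only as a black box — in the $L^\infty$ or $L^2$-bounded setting, where uniform integrability is automatic and the truncation step is vacuous — I would, in practice, simply cite \cite{Koml67} and perhaps sketch only the $L^2$ case, where the Hilbert-space argument in the previous paragraph is clean and self-contained and the convex-combination refinement is immediate.
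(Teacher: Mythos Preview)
The paper does not prove this statement at all: it is merely stated with a citation to \cite{Koml67}, and the only proof supplied in the appendix is for the much simpler $L^2$ version (Lemma~\ref{LtwoKomlos}), which uses nothing more than weak compactness of closed bounded convex sets in a Hilbert space. Your closing remark --- that in practice one should simply cite \cite{Koml67} and sketch only the $L^2$ case --- is precisely what the paper does, so in that sense your proposal and the paper agree.

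The bulk of your proposal, the sketch of Koml\'os' original $L^1$ argument, goes well beyond what the paper attempts, and as a sketch it has a couple of soft spots worth flagging. First, the reduction to $f_n\geq 0$ by splitting into positive and negative parts is not immediate from the statement as given: applying the lemma separately to $(f_n^+)$ and $(f_n^-)$ yields two possibly different subsequences, and reconciling them requires the stronger hereditary form of Koml\'os' theorem (stability under passage to further subsequences), which you only mention later. Second, the Borel--Cantelli step as written is not quite right: Markov's inequality gives $\P[f_n>k]\le C/k$, which is bounded in $n$ but does not tend to $0$ as $n\to\infty$ for fixed $k$; your parenthetical ``after a further thinning'' presumably means coupling $k=k(n)\to\infty$ to the index, but then the truncation level is no longer fixed and the $L^2$ convergence of the truncated Ces\`aro means has to be re-established at varying levels --- this is exactly where Koml\'os' actual proof becomes delicate (and uses a martingale-difference/orthogonality argument rather than a simple diagonalization over fixed $k$). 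None of this affects the paper's needs, since only the $L^2$ version is invoked.
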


For our purposes a (much simpler) $L^2$-version is sufficient. For the convenience of the reader, we state it together with the short proof. 

\begin{lemma}\label{LtwoKomlos}
\label{komlos} Let $(f_{n})_{n\geq1}$ be a sequence of measurable
 functions on a probability space $(\Omega, \F,\P)$ such that $\sup_{n\geq 1} \| f_n\|_2<\infty$.
Then there exist functions  $g_n\in\operatorname*{conv}(f_{n},f_{n+1},\dots)$ such that $(g_n)_{n\geq 1}$ converges in $\|.\|_\Ltwo$ and almost surely.\footnote{We may also formulate this result in terms of Cesaro means. (This is due to \cite{BaSa30}, see also \cite[page 80]{FrBe90}.) But we don't need this.}
\end{lemma}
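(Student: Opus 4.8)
The plan is to exploit the Hilbert space structure of $\Ltwo$ together with the parallelogram law, avoiding any appeal to weak compactness.

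First, for $n\geq 1$ set $C_n=\overline{\conv}\{f_k:k\geq n\}\subseteq \Ltwo$, the closure being taken in the $\|\cdot\|_\Ltwo$-norm. These are non-empty, closed, convex subsets of the Hilbert space $\Ltwo$, and they are nested: $C_1\supseteq C_2\supseteq\cdots$. Since a closed convex subset of a Hilbert space contains a unique element of minimal norm, let $g_n\in C_n$ be that element and put $\gamma_n=\|g_n\|_\Ltwo$. Because the $C_n$ decrease, $(\gamma_n)$ is non-decreasing; it is bounded above by $\sup_n\|f_n\|_\Ltwo<\infty$, so $\gamma_n\uparrow\gamma$ for some finite $\gamma$.

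Next I would show that $(g_n)$ is Cauchy in $\Ltwo$. For $m\geq n$ we have $g_n,g_m\in C_n$ and, by convexity, $\tfrac12(g_n+g_m)\in C_n$, hence $\|\tfrac12(g_n+g_m)\|_\Ltwo\geq\gamma_n$. The parallelogram law then gives
\[
\|g_n-g_m\|_\Ltwo^2=2\gamma_n^2+2\gamma_m^2-4\big\|\tfrac12(g_n+g_m)\big\|_\Ltwo^2\leq 2(\gamma_m^2-\gamma_n^2),
\]
which tends to $0$ as $n,m\to\infty$. Thus $g_n\to g$ in $\Ltwo$ for some $g\in\Ltwo$. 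Each $g_n$ lies only in the closure $C_n$, so I would pick genuine finite convex combinations $\tilde g_n\in\conv\{f_k:k\geq n\}$ with $\|\tilde g_n-g_n\|_\Ltwo<1/n$; then $\tilde g_n\to g$ in $\Ltwo$ as well.

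Finally, to upgrade to almost sure convergence, choose an increasing sequence $(n_k)$ with $\|\tilde g_{n_k}-g\|_\Ltwo\leq 2^{-k}$; since $\|\cdot\|_\Lone\leq\|\cdot\|_\Ltwo$ on a probability space, $\E\sum_k|\tilde g_{n_k}-g|=\sum_k\|\tilde g_{n_k}-g\|_\Lone<\infty$, so $\tilde g_{n_k}\to g$ almost surely. Relabelling, $g_k:=\tilde g_{n_k}\in\conv\{f_j:j\geq n_k\}\subseteq\conv\{f_j:j\geq k\}=\conv(f_k,f_{k+1},\ldots)$, and $(g_k)$ converges to $g$ both in $\|\cdot\|_\Ltwo$ and almost surely, which is the assertion. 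The only mildly delicate points are keeping the convex combinations ``forward'', i.e.\ built from $f_n,f_{n+1},\ldots$ throughout — this is why the $C_n$ are defined as tails — and the harmless relabelling in the last step; everything else is the standard closest-point argument in a Hilbert space, so I expect no real obstacle.
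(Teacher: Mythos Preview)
Your proof is correct. Both your argument and the paper's work with the nested closed convex tails $C_n=\overline{\conv}\{f_k:k\geq n\}$, but the paper finds the limit $g$ differently: it observes that each $C_n$ is weakly compact (as a closed bounded convex set in the Hilbert space generated by the $f_n$), so the nested intersection $\bigcap_n C_n$ is nonempty; any $g$ in this intersection can then be approximated in $\Ltwo$ by genuine forward convex combinations $g_n\in\conv(f_n,f_{n+1},\ldots)$, and a subsequence gives almost sure convergence. Your route replaces the appeal to weak compactness by the closest-point/parallelogram argument: taking $g_n$ to be the element of minimal norm in $C_n$ and showing directly that $(g_n)$ is Cauchy. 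This is arguably more elementary and more constructive, since you never need to mention the weak topology; on the other hand, the paper's version makes the existence of a single limit point in $\bigcap_n C_n$ transparent from the outset. Both approaches are standard Hilbert-space arguments of comparable length, and your relabelling at the end (using $n_k\geq k$) is legitimate.
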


\begin{proof}
Let $\mathbb H$ be the Hilbert space generated by $(f_{n})_{n\geq1}$. 
For $n\geq 1$, denote by $K_n$ the (strong) closure of $ \operatorname*{conv}(f_{n},f_{n+1},\dots)$ which of course coincides with the weak closure by convexity. As the $K_n$ are weakly compact, we may pick $g\in \bigcap_{n=1}^\infty K_n$ and for each $n$ some $g_n\in \operatorname*{conv}(f_{n},f_{n+1},\dots)$ such that $g_n\to g$ in $\Ltwo$. By passing to a subsequence if necessary, we may assume that $g_n$ converges also almost surely.
\end{proof}

In the course of the paper we need to apply Lemma \ref{LtwoKomlos} to countably many sequences simultaneously. Just as we may extract a diagonal subsequence of a sequence of refining subsequences, we may do so analogously in the case of convex combinations.
Assume that for each $m\geq 1$, $(f_n^{m})_{n\geq 1} $ is a sequence of functions bounded in $\Ltwo$. Then we may choose for each $n$ some convex weights $\lambda_n^{n}, \ldots, \lambda_{N_n}^{n}$ (independent of $m$) such that 
$$\big( \lambda_n^{n} f_n^{m} + \ldots+\lambda_{N_n}^{n} f_{N_n}^{m}\big)_{n\geq 1}$$
converges for every $m\in\N$, in $\Ltwo$ and almost surely. 

To see this, one first uses  Lemma \ref{LtwoKomlos} to find convex weights $\lambda_n^{n}, \ldots, \lambda_{N_n}^{n}$ such that   
$( \lambda_n^{n} f_n^{1} + \ldots+\lambda_{N_n}^{n} f_{N_n}^{1})_{n\geq 1}$ converges. In the second step, one applies Lemma \ref{LtwoKomlos} to the sequence 
$( \lambda_n^{n} f_n^{2} + \ldots+\lambda_{N_n}^{n} f_{N_n}^{2})_{n\geq 1} $, to obtain convex weights which work for the first two sequences. Repeating this procedure inductively we obtain sequences of convex weights which work for the first $m$ sequences.  
Then a standard diagonalization argument proves the assertion.

\def\ocirc#1{\ifmmode\setbox0=\hbox{$#1$}\dimen0=\ht0 \advance\dimen0
  by1pt\rlap{\hbox to\wd0{\hss\raise\dimen0
  \hbox{\hskip.2em$\scriptscriptstyle\circ$}\hss}}#1\else {\accent"17 #1}\fi}

\end{document}